\numberwithin{equation}{section}
\newtheorem{theorem}{Theorem}[section]
\newtheorem{lemma}[theorem]{Lemma}
\newtheorem{corollary}[theorem]{Corollary}
\theoremstyle{remark}
\newtheorem{remark}{Remark}
\newtheorem{example}[theorem]{Example}
\newtheorem{definition}[theorem]{Definition}
\newcommand{\PR}{\mathbb P}
\newcommand{\ER}{\mathbb E}
\newcommand{\norm}[1]{\left\lVert#1\right\rVert}
\DeclareMathOperator{\R}{\mathbb{R}}
\DeclareMathOperator{\N}{\mathbb{N}}
\DeclareMathOperator*{\argmin}{arg\,min}
\begin{document}


\title[Dimension Reduction in Vertex ERGM]{Dimension reduction in vertex-weighted exponential random graphs}

\author[R. DeMuse]{Ryan DeMuse}
\author[M. Yin]{Mei Yin}

\address{Department of Mathematics, University of Denver, Denver, CO 80208,
USA} \email{ryan.demuse@du.edu} \email{mei.yin@du.edu}


\begin{abstract}

We investigate the behavior of vertex-weighted exponential random graphs. We show that vertex-weighted exponential random graphs with edge weights induced by products of independent vertex weights are approximate mixtures of graphs whose vertex weight vector is a near fixed point of a certain vector equation. For graphs with Hamiltonians counting cliques, it is demonstrated that, under appropriate conditions, every solution to this equation is close to a block vector with a small number of communities. We prove that for the cases of positive weights and small weights in the Hamiltonian in particular, the vector equation has a unique solution. Lastly, the behavior of vertex-weighted exponential random graphs counting triangles is studied in detail and the solution to the vector equation is shown to approach the zero vector as the weight diverges to negative infinity for sufficiently large networks.

\vskip.1truein

\noindent \textit{2010 Mathematics Subject Classification.} Primary: 05C80; Secondary: 60C05

\noindent \textit{Keywords:} exponential random graphs; stochastic block model
\end{abstract}

\maketitle


\section{Introduction}

Complex networks have become omnipresent structures, especially with the popularity of technological and social networks, but also in economics, healthcare, and infrastructure \cite{F1} \cite{F2} \cite{J}. Their ubiquity has hastened the need to develop new models that aid in the study of these elaborate networks and techniques to study their local and global structures, as well as their development over time. One of the features of many modern networks that makes them difficult to study is the fact that one cannot assume, and is generally untrue, that edges between different vertices are independent of each other. The random graph model $G(n,p)$ is a well-studied probabilistic graph model where edges are placed between the $n$ vertices independently  with identical probability $p$ \cite{G}. This independence assumption is one of the characteristics of $G(n,p)$ that makes it so amenable to study. Unfortunately, this simultaneously makes $G(n,p)$ inadequate for modeling many modern networks. Think of the example of social networks, whose popularity has skyrocketed in recent years. Consider Persons $A$, $B$, and $C$. If $A$ is a friend of $B$ and $B$ is a friend of $C$, it is more likely for $A$ to be a friend of $C$ than it would be if there were no friendships between $A$ and $B$, and $B$ and $C$. This transitivity of friendship leads us to make the assumption that edges in social networks are unlikely to appear independently. Similarly, the mechanism that suggests new friends for users in social networks generically works upon the presumption that if Person $X$ is a friend to a large number of your friends, there is a greater chance that you would like to connect with Person $X$ than with some random user. 

The exponential random graph model (ERGM) is a popular model for random graphs, which assigns exponential families of probability distributions over spaces of graphs. Let $\mathcal{G}_{n}$ be the space of all finite simple graphs on $n$ vertices and $G \in \mathcal{G}_{n}$. The probability of $G$ in an exponential random graph model is given by
	\begin{equation} \label{uwm}
		\PR(G) = \exp\left(n^2 \left(f(G) - \psi_{n}^{f}\right)\right),
	\end{equation}
where $f : \mathcal{G}_{n} \to \R$ is some function on the graph space, and $\psi_{n}^{f}$ is a normalization constant given by
	\begin{equation}
		\psi_{n}^{f} = \frac{1}{n^2} \log \sum_{G \in \mathcal{G}_{n}} e^{n^2 f(G)}.
	\end{equation}
The function $f$ is referred to as the \textit{Hamiltonian} and typically consists of a weighted sum of subgraph counting functions. We remark that it is in general not necessary for $f$ to be a sum of subgraph counting functions. The Hamiltonian could be some other function on the graph space such as the degree distribution; however, in the present setting we will mostly concentrate on $f$ counting subgraphs. Some popular, well-studied Hamiltonians include the edge-two-star and edge-triangle models. For example, the Hamiltonian of the edge-triangle model is given by
	\begin{equation}
		f(G) = \alpha_1 t(K_2, G) + \alpha_2 t(K_3, G),
	\end{equation}
where $\alpha_1, \alpha_2 \in \R$ are weights, $K_k$ is the complete graph on $k$ vertices, and $t(K_k,G)$ is a homomorphism density function. For any finite simple graphs $H$ and $G$, define
	\begin{equation}
		t(H,G) = \frac{\mathsf{hom}(H, G)}{\left|V(G)\right|^{\left|V(H)\right|}},
	\end{equation}
where $\mathsf{hom}(H, G)$ is the number of edge-preserving homomorphisms of $H$ in $G$, and $V(H)$, $V(G)$ are the vertex sets of $H$ and $G$, respectively. This quantity can be interpreted as the probability that a random map from the set $V(G)^{V(H)}$ is edge-preserving.

Even though exponential random graph models on simple graphs already display a wide variety of behaviors and modeling capabilities, the assumption of non-weighted simple graphs still renders them somewhat insufficient when it comes to the modeling of many intricate complex networks \cite{CD} \cite{K}. Social networks exhibit clustering of vertices which corresponds to the formation of cliques as an important subgraph structure of a social network. Groups of friends or colleagues in a social network tend to form clusters, where the number of friendship relations in the cluster is close to the maximum possible. In the present work, we place weights on the vertices to accommodate this phenomenon. To start, we restrict our attention to vertex-weighted exponential random graph models with vertex weights having values $0$ or $1$. The case of vertex weights being $0$-$1$ random variables encourages the formation of cliques if the weight is one, and discourages their formation if the weight is zero. We assume that the vertices have weight $1$ with probability $p$ and $0$ with probability $1-p$. Thus we obtain a distribution on the space of $0$-$1$ vertex-weighted graphs such that any vertex-weighted graph $G$ has probability
	\begin{equation} \label{vwgd}
		\PR(G) = \exp\left(n \left(f(G) - \psi_{n}^{f}\right)\right) \PR_{n}(G),
	\end{equation}
where the normalization constant
	\begin{equation} \label{nc}
		\psi_{n}^{f} = \frac{1}{n} \log \ER_n \left( e^{n f(G)} \right),
	\end{equation}
and $\PR_{n}$ and $\ER_n$ are respectively the probability and expectation induced by the distribution of vertex weights.

There has been previous work done in the area of vertex-weighted ERGM including \cite{DEY}, where the efficiency of the Glauber dynamics on vertex-weighted ERGM was considered, expanding upon \cite{BBS}. This work builds upon the framework of \cite{EG2}; therein, Eldan and Gross showed that any exponential random graph model where the Hamiltonian counts subgraphs is close in $1$-norm distance to a mixture of stochastic block models with a small number of blocks (Theorem 14 in \cite{EG2}). Furthermore, the authors showed that if the weights are positive or sufficiently small (either positive or negative), there is a constant matrix $X_c$, such that every mixture element is close in $1$-norm distance to the constant matrix (Theorems 18 and 19 in \cite{EG2}). This implies that mixture elements behave like a $G(n,p)$ random graph since constant stochastic block matrices are equivalent to the random graph model $G(n,p)$.

We extend the results given in Theorems 14, 18, and 19 of \cite{EG2} to the vertex-weighted setting. The probability distribution defined in Equation \ref{vwgd} may be explicitly thought of as a distribution on the vector of vertex weights. That is, for $X \in \{0,1\}^{n}$, let
	\begin{equation} \label{vwcd}
		\PR(X)=\exp\left(f(X)- n\psi_{n}^{f}\right) \PR_{n}(X),
	\end{equation}
where for ease of computation we incorporate the number of vertices $n$ into the Hamiltonian and $\psi_{n}^{f}$ is similarly defined as in Equation \ref{nc}. In the context of $0$-$1$ vertex weights, the underlying distribution on the vertex weight configurations becomes
	\begin{equation}
		\PR_{n}(X) = p^{\norm{X}_1}(1-p)^{n - \norm{X}_1},
	\end{equation}
where $\norm{\cdot}_1$ is the $1$-norm of the vector $X$. The results of \cite{EG2} and, by extension, this paper rely on the set of near fixed-points of a certain function on the vertex weight configurations, specifically the set of $X$ such that
	\begin{equation} \label{nfp}
		\norm{X - \left(\mathbf{1}_n + \tanh(\nabla f(X))\right)/2}_1 \le cn^{7/8}
	\end{equation}
for some constant $c$, where $\tanh$ is applied element-wise, $\mathbf{1}_{n}$ is the $n$-dimensional all ones vector, and $\nabla f(X)$ is the discrete gradient of the subgraph counting function $f$ as defined in Equation \ref{grad}. Theorem \ref{fixedset} shows that for any Hamiltonian counting cliques, there exists a mixture measure such that the support of the measure is, asymptotic in $n$, given by the set of configurations satisfying Inequality \ref{nfp}. Theorem \ref{poscomm} further shows that any vertex weight configuration on $n$ vertices is close to a configuration with the number of vertex communities independent of $n$ in $1$-norm distance. Theorems \ref{fpsolution} and \ref{smallweights} prove the existence and uniqueness of a constant vector such that every mixture element is close to this constant vector when the weights are positive or sufficiently small in absolute value. In other words, we can partition the vertex set into communities where two vertices in the same community have the same vertex weight. This allows one to reduce the dimension by considering the fixed number of communities and the weight that vertices common to the same community share.

\section{Background}

Let $n > 0$ be the number of vertices. Let the set $\mathcal{C}_n = \{0,1\}^{n}$ represent the $n$-dimensional, discrete, Boolean hypercube and $\overline{\mathcal{C}_{n}} = [0,1]^{n}$ be the continuous hypercube. We also use the convention $[n] = \{1,2,\dots,n\}$. A vector $X \in \mathcal{C}_{n}$ represents a vertex-weight configuration for a graph on $n$ vertices, where $X_{i}$ is the weight at vertex $i$, and similarly for $X \in \overline{\mathcal{C}_{n}}$. Let $f: \mathcal{C}_{n} \to \R$ be a subgraph counting function of the form
	\begin{equation} \label{scf}
		f(X) = n \left( \sum_{q=1}^{l} \frac{\alpha_{q}}{n^{m_{q}}} \sum_{i_1 \neq \dots \neq i_{m_{q}}} X_{i_1} \cdots X_{i_{m_q}} \right) + \log\left( \frac{p}{1-p} \right) \norm{X}_1,
	\end{equation}
where $m_q \ge 2$ are distinct integers, $\alpha_q \in \R$, and $p \in (0,1)$. Note that the probability distribution induced by the vertex weights has been subsumed into the Hamiltonian. That is, we are considering a subgraph counting function that counts the number of certain cliques that appear in the graph with vertex weights given by $X \in \{0,1\}^{n}$. Often we will take $m_1 = 2$ counting the number of edges. Consider an edge $e = \{i,j\}$ present if the product $X_i X_j = 1$. This only occurs if both $X_i$ and $X_j$ are vertex weight $1$. As previously stated, we let $\norm{\cdot}_{1}$ be the $1$-norm of a vector in $\mathcal{C}_{n}$ and $\norm{\cdot}_{2}$ be the $2$-norm so that for $X \in \mathcal{C}_{n}$,
	\begin{equation*}
		\norm{X}_{1} = \sum_{i=1}^{n} X_i \hspace{.5cm}\text{and}\hspace{.5cm} \norm{X}_{2} = \sqrt{\sum_{i=1}^{n} X_i^2}.
	\end{equation*}

\subsection{Graph Limits and the Variational Problem}

In their seminal paper \cite{ChDi}, Chatterjee and Diaconis utilized the framework of graph limit theory to show that certain exponential random graph models behave like Erd\H{o}s-R\'enyi random graphs. We present some of the basics of graph limit theory here so that we may state a few of their results. Let $\mathcal{W}$ be the space of all symmetric, measurable functions $h : [0,1]^{2} \to [0,1]$, called the \textit{graphon} or \textit{graph limit space}. From any simple graph $G$ on $n$ vertices, one can construct an associated graphon, $h^{G}$, such that
	\begin{equation*}
		h^{G}(x,y) = \left\{ \begin{array}{ll} 1 & \text{if }\left\{\left\lceil nx \right\rceil, \left\lceil ny \right\rceil\right\} \text{is an edge in } G, \\ 0 & \text{otherwise.} \\  \end{array} \right.
	\end{equation*}
Define the \textit{cut norm} on the graphon space $\mathcal{W}$ as
	\begin{equation*}
		\norm{h}_{\square} = \sup_{A,B \subseteq [0,1]} \left| \int_{A \times B} h(x,y) \,dx \,dy \right|.
	\end{equation*}
This gives a pseudometric such that for $h,g \in \mathcal{W}$,
	\begin{equation*}
		d_{\square}(h,g) = \norm{h-g}_{\square}.
	\end{equation*}
The induced metric is given by identifying equivalent graphons $h,g$ such that $h \sim g$ if they agree on a set of full measure or there exists a measure preserving bijection $\sigma: [0,1] \to [0,1]$ such that $h(x,y) = g(\sigma x, \sigma y) = g_{\sigma}(x,y)$. Denote by $\tilde{h}$ the closure in $(\mathcal{W}, d_{\square})$ of the orbit $\{h_\sigma\}$. A quotient space $\widetilde{\mathcal{W}}$ is obtained, referred to as the \textit{reduced} or \textit{unlabeled graphon space}, and $d_{\square}$ becomes a metric on this space, $\delta_{\square}$, by defining
	\begin{equation*}
		\delta_{\square}\left(\tilde{h}, \tilde{g}\right) = \inf_{\sigma} d_{\square}\left( h, g_{\sigma} \right).
	\end{equation*}
It follows that $(\widetilde{\mathcal{W}}, \delta_{\square})$ is a compact metric space and the homomorphism density functions are continuous with respect to the cut distance \cite{LL}.

Let $f$ be a Hamiltonian on the reduced graphon space and consider $\psi_{\infty}(f) := \lim_{n \to \infty} \psi_{n}(f)$, known as the \textit{limiting normalization constant}. Define $I: [0,1] \to \R$ as
	\begin{equation*}
		I(u) = \frac{1}{2} u \log u + \frac{1}{2} (1-u) \log(1-u),
	\end{equation*}
and extend $I$ to $\widetilde{\mathcal{W}}$ as
	\begin{equation*}
		I(\tilde{h}) = \int_{0}^{1} \int_{0}^{1} I(h(x,y)) \,dx \,dy,
	\end{equation*}
where $h$ is any representative element of the equivalence class $\tilde{h}$.

\begin{theorem}[Theorem 3.1 in \cite{ChDi}] \label{ChDi}
	If $f$ is a bounded continuous function on the reduced graphon space, then
		\begin{equation} \label{var}
			\psi_{\infty}(f) = \sup_{\tilde{g} \in \widetilde{\mathcal{W}}} \left\{ f(\tilde{g}) - I(\tilde{g}) \right\}.
		\end{equation}
\end{theorem}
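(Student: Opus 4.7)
The plan is to rewrite the partition function as an exponential moment under the Erd\H{o}s-R\'enyi measure $G(n,1/2)$ and evaluate the limit by combining a large deviation principle for random graphons with Varadhan's integral lemma. First I would observe that
\begin{equation*}
\sum_{G \in \mathcal{G}_n} e^{n^2 f(G)} \;=\; 2^{\binom{n}{2}} \, \ER_{1/2}\bigl[e^{n^2 f(\tilde h^G)}\bigr],
\end{equation*}
where $\ER_{1/2}$ denotes expectation under the uniform measure on $\mathcal{G}_n$ (which is $G(n,1/2)$) and the symbol $f(G)$ is identified with $f(\tilde h^G)$ through the associated graphon. Taking $\tfrac{1}{n^2}\log$ and letting $n \to \infty$, the combinatorial prefactor contributes $\lim_n \binom{n}{2}/n^2 \cdot \log 2 = \tfrac{1}{2}\log 2$, so it remains to analyze $\lim_n \tfrac{1}{n^2}\log \ER_{1/2}[e^{n^2 f(\tilde h^G)}]$.

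The central input is the Chatterjee--Varadhan large deviation principle for $\tilde h^G$ in $(\widetilde{\mathcal{W}}, \delta_{\square})$ at speed $n^2$, with good rate function
\begin{equation*}
J(\tilde h) \;=\; \tfrac{1}{2}\int_0^1\!\!\int_0^1 \bigl[\,h(x,y)\log(2h(x,y)) + (1-h(x,y))\log(2(1-h(x,y)))\,\bigr]\, dx\, dy.
\end{equation*}
Because $\widetilde{\mathcal{W}}$ is compact (Lov\'asz--Szegedy) and $f$ is bounded and continuous in $\delta_{\square}$, Varadhan's lemma applies in its strongest form without any additional tightness or moment estimate, yielding
\begin{equation*}
\lim_{n\to\infty} \tfrac{1}{n^2}\log \ER_{1/2}\bigl[e^{n^2 f(\tilde h^G)}\bigr] \;=\; \sup_{\tilde h \in \widetilde{\mathcal{W}}}\bigl\{\,f(\tilde h) - J(\tilde h)\,\bigr\}.
\end{equation*}
A short computation separates the $\log 2$ inside the integrand and gives $J(\tilde h) = I(\tilde h) + \tfrac{1}{2}\log 2$, with $I$ as in the theorem statement.

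Combining the two pieces, the additive constants $\pm \tfrac{1}{2}\log 2$ cancel and we obtain
\begin{equation*}
\psi_\infty(f) \;=\; \tfrac{1}{2}\log 2 + \sup_{\tilde h}\bigl\{\,f(\tilde h) - I(\tilde h) - \tfrac{1}{2}\log 2\,\bigr\} \;=\; \sup_{\tilde h \in \widetilde{\mathcal{W}}}\bigl\{\,f(\tilde h) - I(\tilde h)\,\bigr\},
\end{equation*}
which is the desired formula. The main obstacle is the underlying large deviation principle itself, a deep result of Chatterjee and Varadhan: the upper bound requires a careful partitioning argument in the cut metric in the spirit of Szemer\'edi regularity, and the lower bound uses local perturbations together with the counting lemma for graph limits. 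Once that LDP is in hand, the remaining ingredients---compactness of $\widetilde{\mathcal{W}}$, continuity of $f$ in $\delta_{\square}$, and the constant bookkeeping above---are essentially routine, so the proof reduces to a clean application of Varadhan's lemma.
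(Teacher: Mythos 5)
Your argument is correct and is essentially the proof given by Chatterjee and Diaconis themselves: the paper cites Theorem 3.1 of \cite{ChDi} without proof, and their proof is exactly this reduction to the Chatterjee--Varadhan large deviation principle for $G(n,1/2)$ followed by Varadhan's lemma on the compact space $(\widetilde{\mathcal{W}},\delta_{\square})$, with the $\tfrac{1}{2}\log 2$ bookkeeping handled as you describe. Your identification $J(\tilde h)=I(\tilde h)+\tfrac12\log 2$ and the cancellation of constants are both accurate, so nothing is missing beyond the (correctly attributed) LDP input.
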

Using this formulation of the limiting normalization constant, Chatterjee and Diaconis showed that a graph drawn from the model defined in Equation \ref{uwm} lies close in cut distance to the set of maximizers of the limiting normalization constant.

\begin{theorem}[Theorem 3.2 in \cite{ChDi}] \label{convprob}
	Let $G$ be drawn from $\PR$ defined in Equation \ref{uwm} and $\widetilde{F}^{\ast}$ be the set of maximizers of Equation \ref{var}. Then for all $\varepsilon > 0$ there exists a $C > 0$ and a $\delta > 0$ such that for all $n \in \N$,
		\begin{equation*}
			\PR \left( \delta_{\square} \left( \tilde{g}^{G}, \widetilde{F}^{\ast} \right) > \varepsilon  \right) \le Ce^{-n^2 \delta}.
		\end{equation*}
\end{theorem}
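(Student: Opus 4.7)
The plan is to reduce the tail probability to a ratio of two exponential sums and control each one using the large-deviation principle of Chatterjee and Varadhan for $G(n,1/2)$ in the cut metric together with Theorem~\ref{ChDi}. Specifically, rewrite
\[
\PR\bigl(\delta_{\square}(\tilde g^G,\widetilde F^{\ast})>\varepsilon\bigr)=\frac{\sum_{G:\,\delta_{\square}(\tilde g^G,\widetilde F^{\ast})>\varepsilon}e^{n^2 f(G)}}{\sum_{G}e^{n^2 f(G)}}.
\]
Multiplying numerator and denominator by $2^{-\binom{n}{2}}$ turns each sum into an expectation under $G^\ast\sim G(n,1/2)$. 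The normalized logarithm of the denominator is $\psi_n^{f}$ shifted by $\binom{n}{2}n^{-2}\log 2$, and by Theorem~\ref{ChDi} it converges to $\psi_\infty(f)+\tfrac12\log 2$, where the extra $\tfrac12\log 2$ appears because $I$ plays the role of the Chatterjee--Varadhan rate function for $G(n,1/2)$ shifted by a constant.

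Next, set $K=\{\tilde h\in\widetilde{\mathcal{W}}:\delta_{\square}(\tilde h,\widetilde F^{\ast})\ge\varepsilon\}$, which is compact since $(\widetilde{\mathcal{W}},\delta_{\square})$ is compact. Continuity of homomorphism densities, and hence of any subgraph-count Hamiltonian $f$, with respect to $\delta_{\square}$ \cite{LL} allows the upper-bound half of Varadhan's lemma, applied to the Chatterjee--Varadhan LDP restricted to the closed set $K$, to yield
\[
\limsup_{n\to\infty}\frac{1}{n^2}\log\Bigl(2^{-\binom{n}{2}}\!\!\sum_{G:\,\tilde g^G\in K}\!e^{n^2 f(G)}\Bigr)\le \sup_{\tilde h\in K}\bigl(f(\tilde h)-I(\tilde h)\bigr)-\tfrac12\log 2.
\]
Boundedness of $f$ on $\widetilde{\mathcal{W}}$ keeps the exponential moment of the LDP finite, so Varadhan's upper bound is legitimate.

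The next step is to verify the strict inequality $\sup_{\tilde h\in K}(f(\tilde h)-I(\tilde h))<\psi_\infty(f)$. Upper semicontinuity of $-I$ together with continuity of $f$ and compactness of $K$ guarantees the supremum over $K$ is attained; since $K$ is disjoint from the maximizer set $\widetilde F^{\ast}$ of $f-I$ on $\widetilde{\mathcal{W}}$, the attained value is strictly below $\psi_\infty(f)$. Call twice the gap $2\delta>0$. Dividing the numerator bound by the denominator limit produces
\[
\frac{1}{n^2}\log\PR\bigl(\delta_{\square}(\tilde g^G,\widetilde F^{\ast})>\varepsilon\bigr)\le -\delta
\]
for all $n\ge n_0$, and the finitely many smaller $n$ are absorbed into a constant $C$ using the trivial bound that probabilities are at most $1$.

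The main obstacle is the rigorous deployment of the LDP upper bound for $G(n,1/2)$ on a set defined in the quotient space $\widetilde{\mathcal{W}}$: the sum is over labeled graphs, while the LDP lives after passing to equivalence classes under measure-preserving bijections, so the identification $G\mapsto\tilde g^G$ and the fact that the Hamiltonian and event are label-invariant must be used carefully. A secondary technical point is establishing nonemptiness and compactness of $\widetilde F^{\ast}$, which follows from compactness of $\widetilde{\mathcal{W}}$, continuity of $f$, and lower semicontinuity of $I$. Once these pieces are in place, the compactness gap argument is immediate and yields the stated exponential decay $Ce^{-n^2\delta}$.
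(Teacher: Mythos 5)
This theorem is quoted from Chatterjee and Diaconis \cite{ChDi} and the paper supplies no proof of its own; your argument is essentially the one in the original source — pass to expectations under $G(n,1/2)$, apply the Chatterjee--Varadhan LDP and Varadhan's upper bound on the closed set $K=\{\tilde h:\delta_{\square}(\tilde h,\widetilde F^{\ast})\ge\varepsilon\}$, and extract a strict gap by compactness. The only slip is the sign of the $\tfrac12\log 2$ shift: the normalized log of the denominator converges to $\psi_\infty(f)-\tfrac12\log 2$ (the $G(n,1/2)$ rate function is $I+\tfrac12\log 2$), not $\psi_\infty(f)+\tfrac12\log 2$, but this constant cancels between numerator and denominator, so the conclusion is unaffected.
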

Specializing to subgraph counting function $f$ of the form
\begin{equation}
		f(G) = \sum_{q=1}^{l} \alpha_q t(H_q,G),
	\end{equation}
where $\alpha_q \in \R$ and $H_{q}$ are finite, simple graphs, Chatterjee and Diaconis further derived the following.

\begin{theorem}[Theorem 4.2 in \cite{ChDi}]
	If $H_1 = K_2$ and $\alpha_2, \dots, \alpha_m$ are nonnegative, then the set of maximizers $\widetilde{F}^{\ast}$ may be identified as a finite subset of $[0,1]$ and
		\begin{equation}
			\min_{u \in \widetilde{F}^{\ast}} \delta_{\square} (\tilde{g}^{G}, u) \to 0
		\end{equation}
	in probability as $n \to \infty$. That is, the sampled graph $G$ behaves like an Erd\H{o}s-R\'enyi random graph $G(n,u)$ where $u$ is picked randomly from some distribution on $\widetilde{F}^{\ast}$.
\end{theorem}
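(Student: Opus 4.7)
The plan is to combine the variational characterization of Theorem~\ref{ChDi} with the concentration estimate of Theorem~\ref{convprob}, and to identify the structure of $\widetilde{F}^{\ast}$ in the regime $H_1 = K_2$, $\alpha_2, \dots, \alpha_m \ge 0$. The probabilistic convergence $\min_{u \in \widetilde{F}^{\ast}} \delta_{\square}(\tilde{g}^{G}, u) \to 0$ is already supplied by Theorem~\ref{convprob} once we know $\widetilde{F}^{\ast} \subseteq [0,1]$; thus the substantive task is to show that every maximizer of $f(\tilde g) - I(\tilde g)$ is a constant graphon and that the resulting finite-dimensional problem admits only finitely many optimizers.

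First I would derive an Euler--Lagrange equation for an interior maximizer $h^{\ast}$ of $f - I$ by taking the Fr\'echet derivative with respect to $h$. This yields a fixed-point relation
\begin{equation*}
    h^{\ast}(x, y) \; = \; \frac{1}{1 + \exp\bigl(-2\alpha_1 - \sum_{q=2}^{m} 2 \alpha_q e(H_q) \tau_q(x, y; h^{\ast})\bigr)},
\end{equation*}
where $\tau_q(x, y; h^{\ast})$ is the density of $H_q$-homomorphisms into $h^{\ast}$ with one distinguished edge pinned at $(x, y)$ and the remaining vertices integrated out. Because the logistic function is strictly increasing and each $\tau_q \ge 0$, the hypothesis $\alpha_q \ge 0$ for $q \ge 2$ permits a monotone rearrangement in each variable of $h^{\ast}$, coupled with Jensen's inequality applied to the strictly convex entropy $I$. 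The rearrangement preserves $t(K_2, h^{\ast})$ while not decreasing any $t(H_q, h^{\ast})$ and decreasing $I(h^{\ast})$ strictly unless $h^{\ast}$ is already constant, so any genuine maximizer must be invariant under these symmetrizations and hence constant almost everywhere.

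Reduced to constant graphons, the variational problem collapses to
\begin{equation*}
    \psi_{\infty}(f) \; = \; \sup_{u \in [0,1]} \left\{ \alpha_1 u + \sum_{q=2}^{m} \alpha_q u^{e(H_q)} - I(u) \right\},
\end{equation*}
whose first-order condition is the transcendental equation
\begin{equation*}
    \tfrac{1}{2} \log \tfrac{u}{1-u} \; = \; \alpha_1 + \sum_{q=2}^{m} \alpha_q e(H_q) u^{e(H_q) - 1}.
\end{equation*}
Both sides are real-analytic on $(0,1)$, so the critical-point set is finite; $\widetilde{F}^{\ast}$ is the finite sub-collection achieving the global supremum, and it is naturally identified with a finite subset of $[0,1]$.

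The main obstacle is the reduction to constant graphons. A naive approach would compare $h^{\ast}$ directly to the constant with the same edge density $u = t(K_2, h^{\ast})$ using Jensen on $I$ and Sidorenko-type bounds on the $t(H_q, \cdot)$; but such bounds can go the wrong way, since for non-Sidorenko $H_q$ one may have $t(H_q, h^{\ast}) > u^{e(H_q)}$ (e.g.\ triangles in a block graphon), so positivity of the $\alpha_q$ cannot be exploited term by term. The resolution must couple the rearrangement and the entropy decrease simultaneously, exploiting that the logistic form of the Euler--Lagrange equation already enforces a rigid functional shape on $h^{\ast}$ before constancy is imposed.
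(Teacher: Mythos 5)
This is a quoted result (Theorem 4.2 of \cite{ChDi}); the paper gives no proof, so your proposal must be measured against the argument in \cite{ChDi}. You correctly reduce the probabilistic statement to Theorem \ref{convprob} and correctly identify the crux as showing that every maximizer of $f - I$ is a constant graphon, and your observation that a term-by-term Sidorenko-type comparison $t(H_q,h)\le t(K_2,h)^{e(H_q)}$ fails is accurate. But the step you offer in its place has a genuine gap. The functional $I(\tilde h)=\int_0^1\int_0^1 I(h(x,y))\,dx\,dy$ depends only on the distribution of the values of $h$, so it is invariant under any measure-preserving rearrangement; your claim that rearrangement ``decreases $I(h^{\ast})$ strictly unless $h^{\ast}$ is already constant'' is therefore false, and the claim that rearrangement does not decrease any $t(H_q,\cdot)$ for general $H_q$ is asserted without justification. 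The Euler--Lagrange equation you write down is a correct necessary condition but does not by itself force constancy, and your closing sentence concedes that the coupling of rearrangement and entropy is unresolved. So the reduction to constant graphons --- the entire content of the theorem beyond Theorem \ref{convprob} --- is not established.

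The actual argument of \cite{ChDi} avoids both the Euler--Lagrange equation and rearrangement. The generalized H\"{o}lder inequality, applied to the $e(H)$ edge factors of $\mathrm{hom}(H,\cdot)$ with exponents all equal to $e(H)$, gives
\begin{equation*}
t(H,h)\;\le\;\int_0^1\!\!\int_0^1 h(x,y)^{e(H)}\,dx\,dy
\end{equation*}
for every finite simple graph $H$ and every graphon $h$. Since $\alpha_2,\dots,\alpha_m\ge 0$ and since $t(K_2,h)=\int\int h$ exactly (so no inequality, and hence no sign condition, is needed for $\alpha_1$), one obtains the pointwise bound
\begin{equation*}
f(h)-I(h)\;\le\;\int_0^1\!\!\int_0^1\Bigl(\alpha_1 h(x,y)+\sum_{q=2}^{m}\alpha_q\,h(x,y)^{e(H_q)}-I\bigl(h(x,y)\bigr)\Bigr)dx\,dy\;\le\;\sup_{u\in[0,1]}\ell(u),
\end{equation*}
where $\ell(u)=\sum_q\alpha_q u^{e(H_q)}-I(u)$; the reverse inequality follows from constant graphons, and the equality analysis (the integrand must equal $\sup\ell$ a.e., and $\ell$ is non-constant and real-analytic on $(0,1)$ with no boundary maximizers because $I'$ blows up at $0$ and $1$) identifies $\widetilde{F}^{\ast}$ with the finite set of maximizers of $\ell$. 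Your scalar first-order condition and the analyticity argument for finiteness agree with this last step, but without the H\"{o}lder reduction the proof does not go through.
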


These results for standard exponential random graphs were extended to the realm of edge-weighted exponential random graphs by Yin in \cite{Yin}. However, by utilizing graphons and the cut distance, this approach is inherently limited to the dense regime. In this paper, we will prove corresponding results in both the dense and the sparse regime. Morever, the metric used in the current investigation is the $1$-norm distance, which induces a finer topology than the cut distance \cite{LL}.

\subsection{Definitions}

Several results in this paper are first proven for a single subgraph counting function, then generalized to sums using triangle inequalities and subadditivity. For a Hamiltonian counting a single type of subgraph, we have
	\begin{equation}
\label{single}
		f(X) = \frac{\alpha}{n^{m-1}} \left( \sum_{i_1 \neq \dots \neq i_{m}} X_{i_1} \cdots X_{i_{m}} \right) + \log\left( \frac{p}{1-p} \right) \norm{X}_1
	\end{equation}
for some positive integer $m \ge 2$, $\alpha \in \R$, and $p \in (0,1)$. A single subgraph counting function in this work therefore counts the number of $m$-cliques in the graph obtained from the vertex configuration $X$. For a subgraph counting function $f$ and $j \in [n]$, define the \textit{discrete derivative} of $f$ at vertex $j$ as
	\begin{equation*} 
		\partial_{j} f(X) = \frac{1}{2} \left( f\left(X_1,\dots,X_{j-1},1,X_{j+1},\dots,X_{n}\right) - f\left(X_1,\dots,X_{j-1},0,X_{j+1},\dots,X_{n}\right) \right).
	\end{equation*}
This further gives a definition of the \textit{discrete gradient} of the function $f$, $\nabla f(X)$, as the vector of discrete derivatives where
	\begin{equation} \label{grad}
		\nabla f(X) = \left( \partial_1 f(X),\dots, \partial_n f(X) \right),
	\end{equation}
and the Lipschitz constant of $f$ is given by
	\begin{equation}
		\mathsf{Lip}(f) = \max_{i \in [n], X \in \mathcal{C}_{n}} \left|\partial_{i} f(X) \right|.
	\end{equation}
It will be convenient to define $C_j^{m}$ for $j \in [n]$ and $m \ge 2$ as
	\begin{align}
		C_{j}^{m}(X) 
			&= \sum_{j \neq i_1 \neq \dots \neq i_{m-1}} X_{i_1} \cdots X_{i_{m-1}},
	\end{align}
with the dependence on $X$ suppressed when it is clear from the context. For $m = 1$ let $C_{j}^{1} = 1$, which corresponds to the sum of the empty product. With this notation and $f$ a single subgraph counting function, for $j \in [n]$, we obtain a more compact form of the discrete derivative of $f$ at coordinate $j$ as
 	\begin{equation} \label{partial}
 		\partial_{j} f(X) = \frac{m\alpha}{2 n^{m-1}} C_{j}^{m} + \frac{1}{2} \log\left( \frac{p}{1-p} \right).
 	\end{equation}
Let $\mathbf{C}^{m} \in \R^{n}$ be the vector $\mathbf{C}^{m} = (C_1^{m}, C_2^{m}, \dots, C_{n}^{m})$ for $m \in \N$. Since a generic subgraph counting function is a weighted sum of single subgraph counting functions, the discrete gradient of a generic subgraph counting function may then be written as a weighted sum of vectors $\mathbf{C}^{m}$. The following Lemma \ref{innerprodrep} shows that the vector $\textbf{C}^{m}$ can be defined inductively as a matrix product.

\begin{lemma} \label{innerprodrep}
	Let $J$ be the $n \times n$ matrix consisting of all ones and $I$ the $n \times n$ identity matrix. If $\mathbf{C}^{m}$ is defined as above, then
		\begin{equation}
			\mathbf{C}^{m} = \mathbf{C}^{m-1} \mathbf{X} (J - (m-1)I),
		\end{equation}
	where $\mathbf{X}$ is the matrix with the vector $X$ on the main diagonal and zeros off diagonal.
\end{lemma}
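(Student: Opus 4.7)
The plan is to verify the identity coordinate-by-coordinate: compute the $j$-th entry of the right-hand side explicitly, and then match it to $C_j^m$ via a short counting argument that tracks which indices are excluded from the sum defining $C_j^m$.

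First I would expand the matrix product. Since $\mathbf{X}$ is diagonal with entries $X_k$ and $(J - (m-1)I)_{kj} = 1 - (m-1)\delta_{kj}$, the $j$-th coordinate of $\mathbf{C}^{m-1}\mathbf{X}(J - (m-1)I)$ is
    \begin{equation*}
        \sum_{k=1}^{n} C_k^{m-1} X_k \bigl(1 - (m-1)\delta_{kj}\bigr) = \sum_{k=1}^{n} X_k C_k^{m-1} - (m-1) X_j C_j^{m-1}.
    \end{equation*}
So it suffices to show that $\sum_{k} X_k C_k^{m-1} = C_j^m + (m-1) X_j C_j^{m-1}$.

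Next I would unfold $\sum_{k} X_k C_k^{m-1}$ using the definition of $C_k^{m-1}$: it is precisely the sum, over all ordered $(m-1)$-tuples $(k,i_1,\dots,i_{m-2})$ of pairwise distinct indices in $[n]$, of $X_k X_{i_1}\cdots X_{i_{m-2}}$, with no restriction involving $j$. I would then partition these tuples by whether the index $j$ appears among them. The contribution from tuples not containing $j$ is exactly $C_j^m$ by definition. For tuples that do contain $j$, the index $j$ occupies one of $m-1$ positions and the remaining $m-2$ positions form an ordered $(m-2)$-tuple of pairwise distinct elements of $[n]\setminus\{j\}$; summing over both choices gives $(m-1) X_j C_j^{m-1}$. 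Adding the two pieces yields the required identity. The base case $m=2$ is handled separately using $\mathbf{C}^1 = \mathbf{1}_n^{T}$, which reduces to $(\mathbf{1}_n^{T}\mathbf{X}(J - I))_j = \sum_{i\neq j} X_i = C_j^2$.

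The only real obstacle is the bookkeeping inherent in the "pairwise distinct indices" conditions; once the sum $\sum_k X_k C_k^{m-1}$ is recognized as ranging over unrestricted ordered $(m-1)$-tuples of distinct indices in $[n]$, the role of the correction term $-(m-1)X_j C_j^{m-1}$ becomes transparent as the inclusion–exclusion adjustment that enforces $j \notin \{i_1,\dots,i_{m-1}\}$.
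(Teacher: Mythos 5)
Your proof is correct and follows essentially the same route as the paper: both reduce the claim to the scalar identity $\sum_{k} X_k C_k^{m-1} = C_j^m + (m-1)X_j C_j^{m-1}$ and establish it by recognizing the left side as the sum over all ordered $(m-1)$-tuples of distinct indices and splitting off the tuples containing $j$, which contribute $(m-1)X_j C_j^{m-1}$ since $j$ may occupy any of $m-1$ positions. The only cosmetic difference is that you expand the matrix product first and then prove the identity, whereas the paper proves the identity first; your explicit treatment of the base case $m=2$ is a harmless addition.
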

	\begin{proof}
		Firstly, note that we can write $C_{j}^{m}$ as
			\begin{equation} \label{id0}
				C_{j}^{m} = \left(\sum_{i=1}^{n} X_{i} C_{i}^{m-1}\right) - (m-1) X_j C_{j}^{m-1}.
			\end{equation}
		This can be argued as follows. We will start with the right hand side, which can be expanded as
			\begin{equation} \label{id1}
				\left(\sum_{i=1}^{n} X_{i} \sum_{i \neq i_1 \neq \cdots \neq i_{m-2}} X_{i_1} \cdots X_{i_{m-2}}\right) - (m-1) X_j \sum_{j \neq i_1 \neq \cdots \neq i_{m-2}} X_{i_1} \cdots X_{i_{m-2}}.
			\end{equation}
		The expression in parentheses in Expression \ref{id1} is equivalent to
			\begin{equation}
				\sum_{i_1 \neq \cdots \neq i_{m-1}} X_{i_1} \cdots X_{i_{m-1}},
			\end{equation}
		while the term on the right represents all terms involving the variable $X_j$. Note that there are $m-1$ choices to place the variable $X_j$ in the product. Once $X_j$ is factored out, we simply multiply by all possible nomials of distinct terms not involving $X_j$. Equation \ref{id0} thus holds. Using this equation, we find
			\begin{align}
				\mathbf{C}^{m-1} \mathbf{X} (J - (m-1)I) &= \left(X_1 C_{1}^{m-1}, \dots, X_n C_{n}^{m-1}\right)(J-(m-1)I) \notag \\
					&= \left( \sum_{j=1}^{n} X_{j} C_{j}^{m-1} \right) \mathbf{1}_{n} - (m-1) \left( X_1 C_{1}^{m-1}, \dots, X_{n} C_{n}^{m-1} \right) \notag \\
					&= \mathbf{C}^{m},
			\end{align}
		and the desired expression readily follows.
	\end{proof}

Note that we can also write $\mathbf{C}^{m}$ as an elementary matrix product of the form
	\begin{equation}
		\mathbf{C}^{m} = \mathbf{1}_{n} \prod_{i=1}^{m-1} \mathbf{X}(J-iI),
	\end{equation}
with $J$, $I$, and $\mathbf{X}$ as defined in Lemma \ref{innerprodrep}. Theorem \ref{ergmsbm}, which appears as a main structural theorem in \cite{EG}, will constitute a backbone of one of the main results of this paper, showing that vertex-weighted exponential random graphs are close to mixtures of independent graph models. The formulation of this theorem requires the concept of the \textit{Gaussian-width} of a subset of $\R^n$.

\begin{definition}\label{gc}
	The \textbf{Gaussian-width} of a set $K \subseteq \R^{n}$ is defined as
		\begin{equation}
			\mathsf{GW}(K) = \ER\left( \sup_{X \in K} \left\langle X, \Gamma \right\rangle \right),
		\end{equation}
	where $\Gamma \sim N\left(0, I\right)$ is a standard normal Gaussian vector in $\R^{n}$. For a function $f : \mathcal{C}_{n} \to \R$, the \textit{gradient complexity} of $f$ is defined as
		\begin{equation}
			\mathcal{D}(f) = \mathsf{GW}\left( \left\{ \nabla f(X) : X \in \mathcal{C}_{n} \right\} \cup \{0\} \right).
		\end{equation}
\end{definition}

\begin{definition}
	For $\mathbf{p} \in [0,1]^{n}$, denote by $G(n,\mathbf{p})$ the random vertex-weighted graph such that vertex $i$ has weight $\mathbf{p}_{i}$ and edge $i \sim j$ appears with probability $\mathbf{p}_i \mathbf{p}_j$. Let $\rho$ be a probability measure on $[0,1]^{n}$. Define $G(n,\rho)$ as
		\begin{equation}
			\PR\left( G(n,\rho) = G \right) = \int \PR\left( G(n,\mathbf{p}) = G \right) \,d\rho(\mathbf{p}),
		\end{equation}
	and we say $G(n,\rho)$ is a \textbf{$\rho$-\textit{mixture}}.
\end{definition}

\begin{definition}
	Let $\delta > 0$ and let $\rho$ be a probability measure on $[0,1]^{n}$. A random vertex-weighted graph $G$ is called a \textbf{$(\rho,\delta)$-mixture} if there exists a coupling between $G(n,\rho)$ and $G$ such that
		\begin{equation}
			\ER\norm{G(n,\rho) - G}_1 \le \delta n.
		\end{equation}
\end{definition}

\begin{definition}
	A \textbf{block vector with $k$ communities} is a vector $v \in \R^{n}$ such that there exists a set $U \subseteq \R$ with $\left|U\right| = k$ and $v_{i} \in U$ for all $1 \le i \le n$.
\end{definition}

\section{Setup}

We wish to use Theorem 9 of \cite{EG} to show that vertex-weighted exponential random graphs behave like $G(n,\rho)$ mixtures for some measure $\rho$. The theorem is stated as follows.

\begin{theorem}[Theorem 9 in \cite{EG}] \label{ergmsbm}
	Let $n > 0$, $f: \mathcal{C}_{n} \to \R$ a Hamiltonian, and $X_{n}^{f}$ a random vector given by
		\begin{equation}
			\PR \left( X_{n}^{f} = X \right) = \exp(f(X))/Z,
		\end{equation}
	where $Z$ is a normalizing constant. Denote
		\begin{align}
			&D = \mathcal{D}(f), \\
			&L_1 = \max \left\{ 1, \mathsf{Lip}(f) \right\}, \\
			&L_2 = \max \left\{ 1, \max_{X \neq Y \in \mathcal{C}_{n}} \frac{\norm{\nabla f(X) - \nabla f(Y)}_1}{\norm{X-Y}_1} \right\}.
		\end{align}
	Furthermore, define $\mathcal{X}_{f}$ to be the set
		\begin{equation}
			\mathcal{X}_f = \left\{ X \in \overline{\mathcal{C}_n} : \norm{X - \frac{\mathbf{1}_{n} + \tanh(\nabla f(X))}{2}}_1 \le 5000 L_1 L_{2}^{3/4} D^{1/4} n^{3/4} \right\},
		\end{equation}
	where $\mathbf{1}_{n}$ is the $n$-dimensional all ones vector, $\nabla f(X)$ is extrapolated to $\overline{\mathcal{C}_{n}}$ by Equation \ref{partial} and with $\tanh$ applied entry-wise. Then $X_{n}^{f}$ is a $\left( \rho, 80 \frac{D^{1/4}}{n^{1/4}} \right)$-mixture such that
		\begin{equation}
			\rho\left( \mathcal{X}_{f} \right) \ge 1 - 80 \frac{D^{1/4}}{n^{1/4}}.
		\end{equation}
	In particular, if $D = o(n)$, then $X_{n}^{f}$ is a $(\rho, o(1))$-mixture with $\rho\left( \mathcal{X}_{f} \right) = 1 - o(1)$.
\end{theorem}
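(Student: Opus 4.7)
The plan is to follow the stochastic-localisation strategy used by Eldan in \cite{EG} for nonlinear large deviations. The idea is to realise the Gibbs law $\mu(X)\propto\exp(f(X))$ on $\mathcal{C}_n$ as a mixture $\mu=\int \mu_\theta\,d\rho(\theta)$ of exponentially tilted measures $\mu_\theta(X)\propto\exp(f(X)+\langle\theta,X\rangle)$, with the property that under $\rho$-most tilts $\theta$ the measure $\mu_\theta$ is close in total variation to a product measure on $\mathcal{C}_n$ whose vector of marginal means $\mathbf{p}(\theta)\in\overline{\mathcal{C}_n}$ lies in $\mathcal{X}_f$. Once such a decomposition is produced, taking $\rho$ to be the push-forward law of $\mathbf{p}(\theta)$ gives the desired $(\rho,\delta)$-mixture with $\rho(\mathcal{X}_f)\ge 1-\delta$, since the vertex-weighted graph $G(n,\mathbf{p}(\theta))$ is by construction the independent model matching the marginals of $\mu_\theta$, and a straightforward coupling converts a total-variation bound on vertex-weight vectors into a $1$-norm coupling on the associated graphs via $G_{ij}=X_iX_j$.

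First I would construct the decomposition by running the tilt process $dY_t=\mathbf{m}(t)\,dt+dB_t$, where $\mathbf{m}(t)=\ER_{\mu_t}[X]$ is the mean of the current tilt $\mu_t(X)\propto\exp(f(X)+\langle Y_t,X\rangle)$ and $B_t$ is a standard Brownian motion in $\R^n$. A direct It\^o computation shows that the covariance of $\mu_t$ contracts deterministically, so as $t\to\infty$ each $\mu_t$ collapses to a product measure; the law $\rho$ of the limiting mean $\mathbf{p}=\lim_t \mathbf{m}(t)$ is then the mixing measure. The critical-point condition for a local maximiser of the free energy of a Bernoulli product tilted by $f$ is
\begin{equation*}
X=\tfrac{1}{2}\bigl(\mathbf{1}_n+\tanh(\nabla f(X)+\theta)\bigr),
\end{equation*}
so the task reduces to showing that the residual tilt $Y_t$ at the chosen stopping time is small enough, in the norm induced by $\nabla f$, to displace this fixed point by at most $5000\,L_1 L_2^{3/4}D^{1/4}n^{3/4}$ in $\norm{\cdot}_1$.

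The quantitative bound comes from balancing three errors. The Brownian part of $Y_t$ couples to $\nabla f$ only through its Gaussian width, via the inequality $\ER\sup_{X\in\mathcal{C}_n}\langle\nabla f(X),B_t\rangle\le \sqrt{t}\,\mathcal{D}(f)$, which injects the factor $D^{1/4}$ into the final radius; the Lipschitz constants $L_1$ and $L_2$ convert a perturbation of $Y_t$ into a perturbation of $\nabla f(\mathbf{m}(t))$ and then back to $\mathbf{m}(t)$, yielding the $L_1 L_2^{3/4}$ prefactor; and the total variation between $\mu_t$ and the product measure with the same marginals is controlled by the trace of its covariance, which contracts along the localisation. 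Optimising the stopping time $t\asymp n^{3/2}/\sqrt{D}$ simultaneously pushes $\mathbf{m}(t)$ into $\mathcal{X}_f$ with probability at least $1-80\,D^{1/4}/n^{1/4}$ and keeps the residual covariance small enough that the coupling error at the graph level is also $O(D^{1/4}/n^{1/4})$. The main obstacle is this third step, namely controlling the supremum of the stochastic tilt against $\nabla f$ uniformly over the uncountable family $\{\nabla f(X):X\in\mathcal{C}_n\}$; this is precisely where the Gaussian-width complexity $\mathcal{D}(f)$ must enter the hypothesis in place of the useless dimensional bound $n$.
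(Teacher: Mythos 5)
The paper does not prove this statement at all: it is quoted verbatim as Theorem 9 of the Eldan--Gross paper \cite{EG} and used as a black box, so there is no in-paper argument to compare yours against. Judged on its own terms, your outline correctly identifies the machinery behind \cite{EG}: Eldan's stochastic localisation decomposes $\mu\propto e^{f}$ into exponentially tilted measures $\mu_t\propto e^{f(X)+\langle Y_t,X\rangle}$ with $dY_t=\mathbf{m}(t)\,dt+dB_t$, the components become approximately product as the covariance decays, the marginal-mean vector of a near-product tilted Gibbs measure satisfies the $\tanh$ fixed-point equation up to the tilt, and the Gaussian width enters exactly where you place it, through $\ER\sup_{X}\langle\nabla f(X),B_t\rangle\le\sqrt{t}\,\mathcal{D}(f)$. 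The passage from a total-variation (or covariance) bound on weight vectors to the $1$-norm coupling defining a $(\rho,\delta)$-mixture is also fine, since $\norm{X-Y}_1\le n$ on the cube.

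There are, however, genuine gaps that keep this a plan rather than a proof. First, your construction of $\rho$ is internally inconsistent: if you run the localisation to $t\to\infty$, each $\mu_t$ collapses to a point mass on a vertex of $\mathcal{C}_n$ and the decomposition degenerates to the trivial $\mu=\int\delta_X\,d\mu(X)$, whose ``means'' are cube vertices with no reason to lie in $\mathcal{X}_f$; the entire content of the theorem lives in stopping at a finite time, and your later prescription $t\asymp n^{3/2}/\sqrt{D}$ contradicts the $t\to\infty$ claim. Second, and more substantively, the hardest step is not the Brownian part of the tilt but the drift part: $\int_0^t\mathbf{m}(s)\,ds$ has $1$-norm of order $nt$, and since $\tanh$ is only $1$-Lipschitz this naively displaces the fixed point by order $nt$, which at your proposed stopping time dwarfs the allowed radius $5000\,L_1L_2^{3/4}D^{1/4}n^{3/4}$. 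Showing that the full accumulated tilt perturbs the equation by only $O(D^{1/4}n^{3/4})$ while simultaneously giving the covariance enough time to contract is precisely the delicate balancing act of \cite{EG}, and your sketch asserts rather than performs it. Finally, the covariance does not ``contract deterministically''; only integrated or expected quantities such as $\int_0^T\ER\norm{C_t}_{HS}^2\,dt$ are controlled, which is why the conclusion holds only for $\rho$-most components rather than all of them.
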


In order to use Theorem \ref{ergmsbm}, we must establish bounds on $D, L_1$, and $L_2$. Throughout this Section, we assume that $f$ is a subgraph counting function of the form in Equation \ref{scf}. Consider first the determination of $D$. Using Lemma \ref{innerprodrep} and the definition of Gaussian-width we derive a bound on $\mathcal{D}(f)$ for any $f$ of the form \ref{scf}.

\begin{lemma} \label{gradientcomp}
	Let $f$ be a subgraph counting function of the form in Equation \ref{scf}. Then
		\begin{equation}
			\mathcal{D}(f) \le \frac{\sqrt{n}}{2} \left(\sum_{q=1}^{l} m_q\left|\alpha_{q}\right| \sqrt{1 + \frac{(2-m_{q})^2 - 1}{n}} \right).
		\end{equation}
\end{lemma}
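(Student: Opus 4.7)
The plan is to decompose $\nabla f$ into contributions from each counting term and bound the Gaussian width of each piece using Lemma \ref{innerprodrep}. Applying Equation \ref{partial} term-by-term, the gradient is
\[
\nabla f(X) = \sum_{q=1}^{l}\frac{m_q\alpha_q}{2n^{m_q-1}}\mathbf{C}^{m_q}(X) + \tfrac{1}{2}\log\!\tfrac{p}{1-p}\,\mathbf{1}_n,
\]
and since Gaussian width is translation invariant and subadditive under scaling and Minkowski sums, while each set $\{\mathbf{C}^{m}(X):X\in\mathcal{C}_n\}$ already contains $0$ when $m\ge 2$ (take $X=\mathbf{0}$), the problem reduces to proving
\[
\mathsf{GW}\!\left(\{\mathbf{C}^{m}(X):X\in\mathcal{C}_n\}\right) \le n^{m-1}\sqrt{n + (m-1)(m-3)}
\]
for each $m\ge 2$. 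Since $(2-m)^2 - 1 = (m-1)(m-3)$, assembling these single-term bounds via the decomposition and factoring out $\sqrt{n}$ then recovers the claimed inequality.

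For the single-term estimate I would apply Lemma \ref{innerprodrep} to write $\mathbf{C}^m(X) = \mathbf{C}^{m-1}(X)\,\mathbf{X}(J - (m-1)I)$. Setting $W(X,\Gamma) = \mathbf{X}(J - (m-1)I)\Gamma^{T}$ and applying Cauchy--Schwarz,
\[
\langle \mathbf{C}^m(X),\Gamma\rangle = \langle \mathbf{C}^{m-1}(X),\,W(X,\Gamma)\rangle \le \norm{\mathbf{C}^{m-1}(X)}_2\,\norm{W(X,\Gamma)}_2.
\]
The first factor is deterministic and, via the coordinate estimate $0 \le C_j^{m-1}(X) \le n^{m-2}$, satisfies $\sup_X\norm{\mathbf{C}^{m-1}(X)}_2 \le n^{m-3/2}$. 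The $j$-th coordinate of $W$ equals $X_j(G-(m-1)\Gamma_j)$ with $G=\sum_i\Gamma_i$, so choosing $X_j\equiv 1$ shows $\sup_{X \in \mathcal{C}_n}\norm{W(X,\Gamma)}_2^{2} = \sum_j (G-(m-1)\Gamma_j)^{2}$.

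The crucial observation is that $G-(m-1)\Gamma_j = \sum_{i\ne j}\Gamma_i + (2-m)\Gamma_j$ has variance $(n-1)+(2-m)^2 = n+(m-1)(m-3)$, so Jensen's inequality gives
\[
\ER\sup_X\norm{W(X,\Gamma)}_2 \le \sqrt{\ER\textstyle\sum_j(G-(m-1)\Gamma_j)^{2}} = \sqrt{n\bigl(n+(m-1)(m-3)\bigr)}.
\]
Multiplying by the deterministic factor $n^{m-3/2}$ produces the required single-term bound $n^{m-1}\sqrt{n+(m-1)(m-3)}$, and chaining back through the decomposition completes the argument. The main subtlety, and what makes the constant in the statement come out exactly, is the variance computation: it is the coefficient $2-m$ of $\Gamma_j$ in the expression for the $j$-th coordinate of $W$---coming from the diagonal term in $J-(m-1)I$ supplied by Lemma \ref{innerprodrep}---that feeds through the variance calculation to produce the precise $(2-m)^{2}-1$ appearing inside the square root.
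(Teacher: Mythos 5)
Your proposal is correct and follows essentially the same route as the paper: factor $\mathbf{C}^{m}$ via Lemma \ref{innerprodrep}, apply Cauchy--Schwarz to split off the deterministic factor $\mathbf{C}^{m-1}\mathbf{X}$ (bounded by $n^{m-3/2}$) from the Gaussian factor $(J-(m-1)I)\Gamma$, and control the latter in second moment; your coordinate-wise variance computation of $\sum_{i\ne j}\Gamma_i+(2-m)\Gamma_j$ is exactly the paper's trace computation $\mathsf{tr}\bigl((J-(m-1)I)^2\bigr)=n\bigl(n+(2-m)^2-1\bigr)$ in disguise. The final assembly by subadditivity over the $l$ terms, and the disposal of the constant $\log\frac{p}{1-p}$ shift by mean-zero-ness of $\Gamma$, also match the paper.
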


	\begin{proof}
		For a single subgraph counting function, by Lemma \ref{innerprodrep}, the gradient of $f$ has the form
			\begin{equation}
				\nabla f(X) = \frac{1}{2} \frac{m\alpha}{n^{m-1}} \mathbf{C}^{m-1} \mathbf{X} (J - (m-1)I) + \frac{1}{2} \log\left(\frac{p}{1-p}\right) \mathbf{1}_n.
			\end{equation}
		Recall Definition \ref{gc} for the gradient complexity $\mathcal{D}(f)$. Taking the expectation over $\Gamma$ yields
			\begin{align}
				\mathcal{D}(f)
					&\le \frac{1}{2} \ER \sup_{X \in \mathcal{C}_n} \left| \left\langle \frac{m\alpha}{n^{m-1}} \mathbf{C}^{m-1} \mathbf{X} (J - (m-1)I), \Gamma \right\rangle \right| \notag \\
						&\hspace{1cm}+ \frac{1}{2} \ER \left\langle \log\left(\frac{p}{1-p}\right) \mathbf{1}_n, \Gamma \right\rangle \notag  \\
					&\le  \frac{m\left|\alpha\right|}{2} \ER \sup_{X \in \mathcal{C}_n} \left| \left\langle n^{-(m-2)} \mathbf{C}^{m-1} \mathbf{X}, n^{-1}(J-(m-1)I)\Gamma \right\rangle \right| \notag \\
					&\le \frac{m\left|\alpha\right|}{2} \sqrt{ \sup_{X \in \mathcal{C}_{n}} \norm{n^{-(m-2)} \mathbf{C}^{m-1} \mathbf{X}}_{2}^{2} \ER \norm{n^{-1} (J-(m-1)I)\Gamma}_{2}^{2}} \notag \\
					&\le \frac{m\left|\alpha\right|}{2} \sqrt{ n \, \mathsf{tr}\left( (n^{-1} (J-(m-1)I))^{2} \right)} \notag \\
					&= \frac{m\left|\alpha\right|}{2} \sqrt{n \left( 1 + \frac{(2-m)^2 - 1}{n} \right)}.
			\end{align}
We now turn to generic subgraph counting functions with multiple components. Take $f$ a subgraph counting function of the form \ref{scf},
		where $m_1, \dots, m_{l}$ are distinct positive integers with $m_i \ge 2$ and $\alpha_i \in \R$. Using the subadditivity of the gradient complexity, the conclusion readily follows. The bound on $\mathcal{D}(f)$ may be relaxed further as
			\begin{equation}
				\mathcal{D}(f) \le \frac{\sqrt{n}}{2} \sum_{q=1}^{l} \left|\alpha_{q}\right| m_q (m_q - 1).
			\end{equation}
\end{proof}

Next we examine $\mathsf{Lip}(f)$. From Equation \ref{partial}, we know that for $j \in [n]$,
	\begin{equation*}
		\partial_{j} f(X) = \frac{m\alpha}{2 n^{m-1}} C_{j}^{m} + \frac{1}{2} \log\left( \frac{p}{1-p} \right) \le \frac{m\alpha (n-1)_{m-1}}{2 n^{m-1}} + \frac{1}{2} \left| \log\left( \frac{p}{1-p} \right) \right|,
	\end{equation*}
where $(n)_{m}$ is the falling factorial,
	\begin{equation*}
		(n)_{m} = n(n-1)(n-2)\cdots(n-m+1).
	\end{equation*}
This upper bound comes from assigning each vertex weight to $1$, thus obtaining the number of permutations of $m-1$ distinct variables from a total of $n-1$ variables. Thus for $f$ a general subgraph counting function of the form \ref{scf}, $\mathsf{Lip}(f)$ may be bounded as
	\begin{align}
		\mathsf{Lip}(f) \le \frac{1}{2} \left|\log\left(\frac{p}{1-p}\right)\right| + \frac{1}{2} \sum_{q=1}^{l} m_q\left|\alpha_{q}\right|.
	\end{align}

With the necessary bounds on $D$ and $L_1$ established for Theorem \ref{ergmsbm}, we find an upper bound on $L_2$, i.e. a bound on the quantity
	\begin{equation}
		\max_{X \neq Y \in \mathcal{C}_{n}} \frac{\norm{\nabla f(X) - \nabla f(Y)}_1}{\norm{X - Y}_1}.
	\end{equation}
To do so, we first bound the distance between two vertex weight configurations that differ at a single vertex in Lemma \ref{partialbound}, then use the triangle inequality to bound the $1$-norm distance between configurations that differ at any number of vertices.

\begin{lemma} \label{partialbound}
	Let $X,Y \in \mathcal{C}_{n}$ be two vectors that differ at a single coordinate $i$. Then for any $j \in [n]$,
		\begin{equation}
			\left| \partial_{j} f(X) - \partial_{j} f(Y) \right| \le \frac{1}{2} \sum_{q=1}^{l} \frac{\left|\alpha_{q}\right| m_q(m_q - 1)}{n}.
		\end{equation}
\end{lemma}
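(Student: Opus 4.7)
The plan is to reduce to the single-subgraph case, bound each component separately using combinatorial counting, and then recombine via the triangle inequality. Concretely, by Equation \ref{partial}, for a single subgraph counting function the logarithmic term $\frac{1}{2}\log(p/(1-p))$ is independent of $X$ and cancels in $\partial_j f(X) - \partial_j f(Y)$. Hence for a general Hamiltonian of the form \ref{scf},
\begin{equation*}
\left|\partial_j f(X) - \partial_j f(Y)\right| \le \sum_{q=1}^{l} \frac{m_q |\alpha_q|}{2 n^{m_q-1}} \left| C_j^{m_q}(X) - C_j^{m_q}(Y) \right|,
\end{equation*}
and it suffices to show $\left|C_j^{m}(X) - C_j^{m}(Y)\right| \le (m-1)n^{m-2}$ whenever $X, Y$ differ at the single coordinate $i$.

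To establish this combinatorial bound, I would split into two cases. If $i = j$, then by definition of $C_j^m$ the index $i$ never appears in any product $X_{i_1}\cdots X_{i_{m-1}}$, so $C_j^m(X) = C_j^m(Y)$ and the inequality is trivial. If $i \neq j$, only the terms of $C_j^m$ in which $i$ appears as one of the indices $i_1, \ldots, i_{m-1}$ contribute to the difference. Factoring out the changed variable gives
\begin{equation*}
C_j^m(X) - C_j^m(Y) = (X_i - Y_i) \sum_{k=1}^{m-1} \sum_{\substack{j \neq i_1 \neq \cdots \neq i_{m-1} \\ i_k = i}} \prod_{r \neq k} X_{i_r},
\end{equation*}
and each inner sum has at most $(n-2)(n-3)\cdots(n-m+1) = (n-2)_{m-2}$ nonzero terms (choosing distinct indices from $[n]\setminus\{j,i\}$), each of modulus at most $1$. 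Since $|X_i - Y_i| \le 1$ and $(n-2)_{m-2} \le n^{m-2}$, we obtain $|C_j^m(X) - C_j^m(Y)| \le (m-1) n^{m-2}$.

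Plugging this back into the displayed inequality yields
\begin{equation*}
\frac{m_q |\alpha_q|}{2 n^{m_q-1}} \cdot (m_q - 1) n^{m_q-2} = \frac{|\alpha_q| m_q (m_q - 1)}{2n},
\end{equation*}
and summing over $q \in \{1,\dots,l\}$ gives the desired bound. There is no real obstacle here; the only point requiring care is the correct counting of terms in $C_j^m$ that depend on coordinate $i$, namely the factor $m-1$ for choosing the position of $X_i$ in the product, combined with the falling factorial count for the remaining indices, which is why the $m(m-1)$ factor appears in the final estimate.
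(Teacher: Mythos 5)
Your proposal is correct and follows essentially the same route as the paper: isolate the terms of $C_j^{m_q}$ that involve the changed coordinate $i$, count them as $(m_q-1)$ choices of position times the falling factorial $(n-2)(n-3)\cdots(n-m_q+1)$ for the remaining distinct indices, and bound this by $(m_q-1)n^{m_q-2}$ before summing over $q$. The only cosmetic difference is that the paper groups the surviving terms by unordered index sets $P$ with a factor $(m_q-1)!$, whereas you group them by the position of $i$ in the ordered tuple; the counts agree.
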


	\begin{proof}
		Suppose that $X_i = 1$ and $Y_i = 0$. Note that if $i = j$, then
			\begin{equation}
				\left| \partial_{j} f(X) - \partial_{j} f(Y) \right| = 0.
			\end{equation}
		Assume that $i \neq j$, then from Equation \ref{partial},
			\begin{align}
				\left| \partial_{j} f(X) - \partial_{j} f(Y) \right| &\le \frac{1}{2} \sum_{q=1}^{l} \frac{m_q\left|\alpha_q\right|}{n^{m_q - 1}} \left| \sum_{j \neq i_1 \neq \dots \neq i_{m_{q}-1}} \left(X_{i_1}\cdots X_{i_{m_q - 1}} - Y_{i_1}\cdots Y_{i_{m_q - 1}}\right) \right| \notag\\
					&= \frac{1}{2} \sum_{q=1}^{l} \frac{m_q\left|\alpha_q\right|}{n^{m_q - 1}} \left| (m_q - 1)! \sum_{\substack{P \subseteq [n] \setminus \{i,j\} \\ \left|P\right| = m_{q} - 2}} \prod_{p \in P} X_{p} \right| \notag\\
					&\le \frac{1}{2} \sum_{q=1}^{l} \frac{\left|\alpha_q\right|m_q! }{n^{m_q - 1}} \frac{(n-2)(n-3) \cdots (n-m_{q}+1)}{(m_q-2)!} \notag\\
					&\le \frac{1}{2} \sum_{q=1}^{l} \frac{\left|\alpha_q\right|m_q(m_q - 1) }{n}.
			\end{align}
	\end{proof}

Using Lemma \ref{partialbound}, we may bound the $1$-norm distance between the gradient vectors of any two vertex-weight configurations.

\begin{lemma} \label{normdistpreserved}
	Let $f$ be a subgraph counting function of the form \ref{scf} and $X,Y \in \overline{\mathcal{C}_{n}}$. Then
		\begin{equation}
			\norm{\nabla f(X) - \nabla f(Y)}_{1} \le C \norm{X - Y}_{1},
		\end{equation}
	where
		\begin{equation}
			C = \frac{1}{2} \sum_{q=1}^{l} \left|\alpha_q\right| m_q (m_q - 1).
		\end{equation}
\end{lemma}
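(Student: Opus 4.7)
The plan is to prove the bound by interpolating between $X$ and $Y$ along a coordinate-by-coordinate path and then applying Lemma \ref{partialbound} to each one-coordinate step. The main twist relative to the discrete case is that $X,Y \in \overline{\mathcal{C}_n}=[0,1]^n$, so the interpolants need not lie in $\{0,1\}^n$; this will be handled by exploiting the fact that each $\partial_j f$, as extrapolated by Equation \ref{partial}, is a multilinear polynomial in the coordinates of $X$.

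Concretely, I would define $Z^{(0)}=Y$ and, for $k=1,\dots,n$,
\begin{equation*}
Z^{(k)}_i = \begin{cases} X_i & \text{if } i\le k,\\ Y_i & \text{if } i>k,\end{cases}
\end{equation*}
so that $Z^{(n)}=X$ and $Z^{(k)}$ and $Z^{(k-1)}$ agree in every coordinate except the $k$-th, where their values are $X_k$ and $Y_k$. For any fixed $j$, the function $\partial_j f$ is affine in the variable $X_k$, hence
\begin{equation*}
\partial_j f(Z^{(k)}) - \partial_j f(Z^{(k-1)}) = (X_k-Y_k)\bigl(\partial_j f(Z^{(k-1)}|_{k=1}) - \partial_j f(Z^{(k-1)}|_{k=0})\bigr).
\end{equation*}

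Next I would bound the bracketed slope uniformly by $\frac{1}{2}\sum_{q=1}^{l}\frac{|\alpha_q|m_q(m_q-1)}{n}$. When $Z^{(k-1)}$ happens to lie in $\mathcal{C}_n$ this is exactly Lemma \ref{partialbound} applied to the two configurations $Z^{(k-1)}|_{k=1}$ and $Z^{(k-1)}|_{k=0}$. For general $Z^{(k-1)}\in\overline{\mathcal{C}_n}$ one simply rereads the proof of Lemma \ref{partialbound}: the only place values of coordinates are used is in bounding $\prod_{p\in P}X_p\le 1$, which still holds on $[0,1]^n$. Summing the estimate over all $n$ coordinates $j$ gives
\begin{equation*}
\norm{\nabla f(Z^{(k)}) - \nabla f(Z^{(k-1)})}_1 \le n\cdot\frac{1}{2}\sum_{q=1}^{l}\frac{|\alpha_q|m_q(m_q-1)}{n}\cdot |X_k-Y_k| = C\,|X_k-Y_k|.
\end{equation*}

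Finally, a telescoping triangle inequality along $Z^{(0)},\dots,Z^{(n)}$ yields
\begin{equation*}
\norm{\nabla f(X) - \nabla f(Y)}_1 \le \sum_{k=1}^{n}\norm{\nabla f(Z^{(k)}) - \nabla f(Z^{(k-1)})}_1 \le C\sum_{k=1}^{n}|X_k-Y_k| = C\norm{X-Y}_1,
\end{equation*}
which is the desired inequality. There is no serious obstacle; the only subtlety is making sure that Lemma \ref{partialbound}, originally phrased for $\mathcal{C}_n$, is used only in a form that its proof actually establishes for $\overline{\mathcal{C}_n}$, and this is immediate from multilinearity of the discrete derivatives.
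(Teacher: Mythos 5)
Your proposal is correct and follows essentially the same route as the paper: a coordinate-by-coordinate telescoping path, linearity (affineness) of $\partial_j f$ in each single coordinate to reduce to the $0$--$1$ increment bounded by Lemma \ref{partialbound}, then summation over $j$ and the triangle inequality. Your explicit remark that the proof of Lemma \ref{partialbound} extends verbatim to $\overline{\mathcal{C}_n}$ because it only uses $\prod_{p\in P}X_p\le 1$ is a point the paper glosses over, but it is the same argument.
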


	\begin{proof}
		Suppose that $X$ and $Y$ differ in a single coordinate $i$. Holding all other coordinates fixed, $\nabla f$ is linear as a
function of the $i$th coordinate.  Then using Lemma \ref{partialbound},
			\begin{align}
				\norm{\nabla f(X) - \nabla f(Y)}_{1} &= \sum_{j=1}^{n} \left| \partial_{j} f(X) - \partial_{j} f(Y) \right| \notag\\
					&\le \sum_{q=1}^{l} \sum_{j=1}^{n} \frac{\left|\alpha_q\right|m_q(m_q - 1)}{2n} \left|X_i - Y_i\right| \notag\\
					&= \sum_{q=1}^{l} \frac{\left|\alpha_q\right|m_q(m_q - 1)}{2} \left|X_i - Y_i\right|.
			\end{align}
		By the triangle inequality, we extend this to vectors that differ in more than one coordinate. The desired result is obtained.
	\end{proof}

\section{Main Results}

With the bounds in Theorem \ref{ergmsbm} established in Lemmas \ref{gradientcomp}, \ref{partialbound}, and \ref{normdistpreserved}, we are ready to formulate our first main result Theorem \ref{fixedset}. Fix a subgraph counting function $f$ of the form in Equation \ref{scf}. Let $C_{\alpha}$ be the constant
	\begin{equation} \label{calpha}
		C_{\alpha} = \max \left\{ 2, \frac{1}{2} \left| \log\left(\frac{p}{1-p}\right) \right| + \frac{1}{2} \sum_{q=1}^{l} \left|\alpha_{q}\right| m_q (m_q - 1) \right\}.
	\end{equation}
Furthermore, let $\mathcal{X}_{f} \subseteq [0,1]^{n}$ be defined such that
	\begin{equation}
		\mathcal{X}_{f} = \left\{ X \in [0,1]^{n} \, : \, \norm{X - \frac{\mathbf{1}_{n} + \tanh(\nabla f(X))}{2}}_{1} \le 5000 C_{\alpha}^{2} n^{7/8}  \right\}.
	\end{equation}
Theorem \ref{ergmsbm} gives that vertex-weighted exponential random graphs behave like mixture models. Additionally, the majority of the weight in this mixture model lies on configurations that are almost fixed-points to the vector equation
	\begin{equation}\label{vector}
		X = \frac{\mathbf{1}_{n} + \tanh(\nabla f(X))}{2}.
	\end{equation}

\begin{theorem} \label{fixedset}
	Let $f$ be a subgraph counting function where the subgraphs are counted as weighted, normalized products of vertex weights as in Equation \ref{scf}. Then there exists a measure $\rho$ on $[0,1]^{n}$ dependent on $n$ and $f$ such that $X_{n}^{f}$ is a $\left( \rho, 80 C_{\alpha}^{1/4} n^{-1/8} \right)$-mixture such that
		\begin{equation}
			\rho(\mathcal{X}_{f}) \ge 1 - 80 C_{\alpha}^{1/4} n^{-1/8}.
		\end{equation}
\end{theorem}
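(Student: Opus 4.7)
The plan is to apply Theorem \ref{ergmsbm} directly to the Hamiltonian $f$, using the explicit bounds on $D$, $L_1$, and $L_2$ already established in Lemmas \ref{gradientcomp}, \ref{partialbound}, and \ref{normdistpreserved}. First I collect the three ingredients: from the relaxed form at the end of Lemma \ref{gradientcomp}, $D = \mathcal{D}(f) \le \frac{\sqrt{n}}{2} \sum_{q=1}^{l} |\alpha_q| m_q (m_q-1)$; from the Lipschitz computation following Equation \ref{partial}, $\mathsf{Lip}(f) \le \frac{1}{2}|\log(p/(1-p))| + \frac{1}{2} \sum_{q=1}^{l} m_q |\alpha_q|$; and from Lemma \ref{normdistpreserved}, $L_2 \le \frac{1}{2} \sum_{q=1}^{l} |\alpha_q| m_q (m_q-1)$.

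Next I verify that each of these quantities is dominated by the single constant $C_\alpha$ from Equation \ref{calpha}. Because $m_q \ge 2$ forces $m_q \le m_q(m_q-1)$ term by term, the Lipschitz upper bound is at most $C_\alpha$; the bound on $L_2$ is half of one of the quantities appearing in $C_\alpha$; and the coefficient $\frac{1}{2}\sum_q |\alpha_q| m_q(m_q-1)$ controlling $D$ is likewise $\le C_\alpha$, so $D \le C_\alpha \sqrt{n}$. The clause $C_\alpha \ge 2$ in Equation \ref{calpha} is included precisely so that $C_\alpha \ge 1$, which neutralizes the $\max\{1, \cdot\}$ cutoffs in the definitions of $L_1$ and $L_2$ inside Theorem \ref{ergmsbm} and makes the uniform substitution $L_1, L_2 \le C_\alpha$ unconditional.

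Finally I substitute these uniform bounds into the conclusion of Theorem \ref{ergmsbm}. The mixture tolerance becomes
\begin{equation*}
80 \frac{D^{1/4}}{n^{1/4}} \le 80 \frac{C_\alpha^{1/4} n^{1/8}}{n^{1/4}} = 80 C_\alpha^{1/4} n^{-1/8},
\end{equation*}
and the radius in the definition of the Eldan--Gross support set becomes
\begin{equation*}
5000 L_1 L_2^{3/4} D^{1/4} n^{3/4} \le 5000 \cdot C_\alpha \cdot C_\alpha^{3/4} \cdot C_\alpha^{1/4} n^{1/8} \cdot n^{3/4} = 5000 C_\alpha^{2} n^{7/8},
\end{equation*}
which matches the radius in the $\mathcal{X}_f$ defined just before the statement of Theorem \ref{fixedset}. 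Enlarging the tolerance only weakens the mixture statement, and enlarging the radius can only enlarge $\mathcal{X}_f$ and hence $\rho(\mathcal{X}_f)$, so both conclusions of Theorem \ref{fixedset} follow.

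The only real obstacle is bookkeeping: keeping track of the different combinatorial factors $m_q$, $m_q - 1$, and $m_q(m_q-1)$ that appear in the three source lemmas, and checking that the relaxation $D \le C_\alpha \sqrt{n}$ is valid for every admissible $m_q \ge 2$ (in particular the degenerate $m_q = 2$ case, where $(2-m_q)^2 - 1 = -1$ rather than a nonnegative quantity, still leaves the $\sqrt{n}$-scale bound intact).
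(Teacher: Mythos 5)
Your proposal is correct and is exactly the argument the paper intends: the paper states Theorem \ref{fixedset} without a separate proof precisely because it follows by plugging the bounds $D \le C_{\alpha}\sqrt{n}$, $L_1 \le C_{\alpha}$, $L_2 \le C_{\alpha}$ from Lemmas \ref{gradientcomp}--\ref{normdistpreserved} into Theorem \ref{ergmsbm}. Your arithmetic for the tolerance $80 C_{\alpha}^{1/4} n^{-1/8}$ and the radius $5000 C_{\alpha}^{2} n^{7/8}$, and your observation that enlarging the radius only increases $\rho(\mathcal{X}_f)$, all check out.
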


We will use this preceding theorem to show in Theorem \ref{poscomm} that if $X \in \mathcal{X}_{f}$, then there is a block vector $X^{\ast}$ with a small number of communities such that $X$ is close in $1$-norm distance to $X^{\ast}$. Toward this end, we will require some statements concerning the preservation of length under orthogonal projections.

\begin{lemma}[Theorem 2.1 in \cite{DG}] \label{normspreserved}
	Let $0 < \delta < 1$, let $d,k > 0$ be positive integers, let $\pi: \R^{d} \to \R^{k}$ be an orthogonal projection into a uniformly random $k$ dimensional subspace, and let $g : \R^{d} \to \R^{k}$ be defined as $ g(v) = \sqrt{\frac{d}{k}} \pi(v)$. Then for any vector $v \in \R^{d}$,
		\begin{equation}
			\mathbb{P} \left( (1-\delta)\norm{v}_{2}^{2} \le \norm{g(v)}_{2}^{2} \le (1+\delta) \norm{v}_{2}^{2} \right) \ge 1 - 2e^{-k\left( \delta^2 / 2 - \delta^{3} / 3 \right)/2}.
		\end{equation}
\end{lemma}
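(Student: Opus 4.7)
The plan is to follow the classical Dasgupta--Gupta route, transferring the randomness from the choice of subspace to the choice of vector, and then applying an exponential Markov bound with the chi-square moment generating function. First, since both inequalities are homogeneous of degree two in $\norm{v}_2$, I may assume without loss of generality that $\norm{v}_2 = 1$. By the rotational invariance of the uniform distribution on the Grassmannian of $k$-dimensional subspaces of $\R^d$, the distribution of $\norm{\pi(v)}_2^2$ coincides with that of $\norm{\pi_0(w)}_2^2$, where $\pi_0$ is the projection onto $\mathrm{span}(e_1,\dots,e_k)$ and $w$ is drawn uniformly from the unit sphere in $\R^d$.

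Next I would represent $w$ via a Gaussian sample: write $w = Y/\norm{Y}_2$ with $Y \sim N(0, I_d)$. Then
\begin{equation*}
\norm{\pi_0(w)}_2^2 \stackrel{d}{=} \frac{S}{L}, \qquad S := \sum_{i=1}^{k} Y_i^2, \qquad L := \sum_{i=1}^{d} Y_i^2,
\end{equation*}
so that $\norm{g(v)}_2^2$ has the same distribution as $(d/k)(S/L)$, a quantity whose expectation is $1$ by symmetry. The event in the lemma is therefore $\{(1-\delta)kL \le dS \le (1+\delta)kL\}$, and I would bound each of its two complementary tails separately.

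For the upper tail, rewrite $\{dS > (1+\delta)kL\}$ as $\{\sum_{i=1}^{d} a_i Y_i^2 > 0\}$ with $a_i = d - (1+\delta)k$ for $i \le k$ and $a_i = -(1+\delta)k$ for $i > k$. For any $t > 0$ with $2 t a_i < 1$ for all $i$, Markov's inequality combined with the chi-square MGF $\ER[e^{s Y_i^2}] = (1-2s)^{-1/2}$ gives
\begin{equation*}
\PR\Big(\textstyle\sum_i a_i Y_i^2 > 0\Big) \le \ER \exp\Big(t \sum_i a_i Y_i^2\Big) = \prod_{i=1}^{d} (1 - 2 t a_i)^{-1/2}.
\end{equation*}
The lower tail is handled analogously by taking $t > 0$ and the coefficients $a_i' = -(d - (1-\delta)k)$ for $i \le k$ and $a_i' = (1-\delta)k$ for $i > k$.

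The only genuine obstacle is the optimization of $t$ and the simplification of the resulting product. With the standard choice $t = \delta/(2(1+\delta)d)$ for the upper tail (and the symmetric value for the lower tail), each product collapses to $\bigl((1+\delta)e^{-\delta}\bigr)^{k/2}$ and $\bigl((1-\delta)e^{\delta}\bigr)^{k/2}$ respectively. Applying the inequality $\log(1+x) \le x - x^2/2 + x^3/3$ to $\pm\delta$ shows that both expressions are bounded by $\exp(-k(\delta^2/2 - \delta^3/3)/2)$, and a union bound over the two tails produces the factor of $2$ in the stated estimate.
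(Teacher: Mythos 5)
The paper does not prove this lemma at all: it is imported verbatim as Theorem 2.1 of the cited Dasgupta--Gupta paper, so there is no in-paper argument to compare against. Your sketch is precisely the classical Dasgupta--Gupta proof (rotational invariance to swap a random subspace for a random vector, the Gaussian representation $w = Y/\norm{Y}_2$, and a Chernoff bound via the chi-square moment generating function), and the structure is sound: the reduction to the event $\{(1-\delta)kL \le dS \le (1+\delta)kL\}$, the coefficients $a_i$, the validity of $\log(1+x) \le x - x^2/2 + x^3/3$ for both $x=\delta$ and $x=-\delta$, and the union bound are all correct. The one slip is your choice of $t$: the value that makes the product genuinely collapse is $t = \frac{\delta}{2(1+\delta)\left(d-(1+\delta)k\right)}$ (when $d > (1+\delta)k$; otherwise the upper tail is empty), which yields
\begin{equation*}
\prod_{i=1}^{d}\left(1-2ta_i\right)^{-1/2} = (1+\delta)^{k/2}\left(1-\tfrac{\delta k}{d-k}\right)^{(d-k)/2} \le \left((1+\delta)e^{-\delta}\right)^{k/2}
\end{equation*}
after one application of $1+x\le e^x$. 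With your $t = \delta/(2(1+\delta)d)$ the product instead equals $\left(\tfrac{1}{1+\delta}+\tfrac{\delta k}{d}\right)^{-k/2}\left(1+\tfrac{\delta k}{d}\right)^{-(d-k)/2}$, which does not literally equal $\left((1+\delta)e^{-\delta}\right)^{k/2}$ and requires a separate (true but not immediate) verification that it is still dominated by that quantity. The analogous remark applies to the lower tail. This is a cosmetic repair, not a gap in the method.
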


\begin{lemma}[Lemma 30 in \cite{EG2}] \label{dotdistpreserved}
	Let $0 < \delta < 1$, $d$ and $k$ be positive integers, and $g : \R^{d} \to \R^{k}$ be a linear transformation. If $u,v \in \R^{d}$ are two vectors of norm smaller than $1$ such that
		\begin{equation}
			(1-\delta)\norm{u \pm v}_{2}^{2} \le \norm{g(u \pm v)}_{2}^{2} \le (1+\delta)\norm{u \pm v}_{2}^{2},
		\end{equation}
	then
		\begin{equation}
			\left| \left\langle g(u), g(v) \right\rangle - \left\langle u, v \right\rangle \right| \le 2\delta.
		\end{equation}
\end{lemma}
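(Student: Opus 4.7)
The plan is to exploit the polarization identity, which expresses inner products in terms of squared norms of sums and differences, because the hypothesis is exactly about how $g$ distorts those squared norms. Since $g$ is linear, $g(u) \pm g(v) = g(u \pm v)$, so no information is lost in moving between the two sides.

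Concretely, I would first write
\begin{equation*}
	\langle u, v \rangle = \frac{1}{4}\left( \norm{u+v}_2^2 - \norm{u-v}_2^2 \right)
	\qquad\text{and}\qquad
	\langle g(u), g(v) \rangle = \frac{1}{4}\left( \norm{g(u+v)}_2^2 - \norm{g(u-v)}_2^2 \right),
\end{equation*}
using linearity of $g$ in the second identity. Subtracting and applying the triangle inequality,
\begin{equation*}
	\left| \langle g(u), g(v) \rangle - \langle u, v \rangle \right|
	\le \frac{1}{4}\left( \left| \norm{g(u+v)}_2^2 - \norm{u+v}_2^2 \right| + \left| \norm{g(u-v)}_2^2 - \norm{u-v}_2^2 \right| \right).
\end{equation*}

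Next I would apply the hypothesis, which says each of those two discrepancies is at most $\delta \norm{u \pm v}_2^2$, to obtain
\begin{equation*}
	\left| \langle g(u), g(v) \rangle - \langle u, v \rangle \right|
	\le \frac{\delta}{4}\left( \norm{u+v}_2^2 + \norm{u-v}_2^2 \right).
\end{equation*}
The parallelogram identity then collapses the bracket to $2\norm{u}_2^2 + 2\norm{v}_2^2$, and the assumption $\norm{u}_2, \norm{v}_2 < 1$ yields $\norm{u}_2^2 + \norm{v}_2^2 < 2$, giving the desired bound (even with a slightly better constant than the stated $2\delta$).

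There is essentially no obstacle here; the lemma is a clean polarization argument and the only care needed is making sure linearity of $g$ is used to justify $g(u) \pm g(v) = g(u \pm v)$, so that the norm-preservation hypothesis can be applied to the sum and difference vectors directly rather than to $g(u)$ and $g(v)$ individually.
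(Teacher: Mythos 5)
Your polarization argument is correct and complete: the identity $\langle u,v\rangle = \tfrac{1}{4}\left(\norm{u+v}_2^2 - \norm{u-v}_2^2\right)$, linearity of $g$, the hypothesis applied to $u\pm v$, and the parallelogram law together give the bound $\tfrac{\delta}{2}\left(\norm{u}_2^2+\norm{v}_2^2\right) < \delta \le 2\delta$, so you even improve the stated constant. The paper itself imports this lemma from \cite{EG2} without proof, and your argument is precisely the standard one used there, so there is nothing to compare beyond confirming it works.
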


\begin{theorem} \label{poscomm}
	Let $0 < \delta < 1$ and let $f$ be a subgraph counting function of the form \ref{scf}. Then there exists a constant $C_{\delta} > 0$, independent of $n$, such that for any $X \in \mathcal{X}_{f}$, there exists a block vector $X^{\ast}$ with no more than $C_{\delta}$ communities such that
		\begin{equation}
			\norm{X - X^{\ast}}_{1} \le \delta n + 5000 C_{\alpha}^{2} n^{7/8}.
		\end{equation}
\end{theorem}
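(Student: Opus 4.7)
The plan is to partition the coordinates $[n]$ into finitely many groups by the value of $\partial_j f(X)$, define $X^{\ast}$ as the block vector that assigns to each coordinate the value $(1+\tanh(\gamma))/2$ for a representative gradient $\gamma$ of its group, and combine the $1$-Lipschitz property of $\tanh$ with the near-fixed-point condition $X \in \mathcal{X}_f$ through a triangle inequality.

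First I would verify the $n$-independent bound $|\partial_j f(X)| \le C_\alpha$ for all $j \in [n]$ and all $X \in [0,1]^n$. Applying the formula of Equation \ref{partial} termwise to the Hamiltonian \ref{scf} and using $0 \le C_j^{m}(X) \le (n-1)_{m-1} \le n^{m-1}$ on $[0,1]^n$,
\[
|\partial_j f(X)| \le \frac{1}{2}\left|\log\frac{p}{1-p}\right| + \sum_{q=1}^{l} \frac{m_q |\alpha_q|}{2\, n^{m_q-1}}\, n^{m_q-1} \le C_\alpha,
\]
which is independent of $n$. Fix $K = \lceil 4 C_\alpha/\delta\rceil$ and partition $[-C_\alpha, C_\alpha]$ into $K$ subintervals $I_1,\ldots,I_K$ of width $2C_\alpha/K \le \delta/2$, with midpoints $\gamma_1,\ldots,\gamma_K$. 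For each $j \in [n]$ let $c(j)$ be the unique index such that $\partial_j f(X) \in I_{c(j)}$, and set
\[
X^{\ast}_j \defeq \frac{1 + \tanh(\gamma_{c(j)})}{2}.
\]
The vector $X^{\ast}$ is then a block vector taking at most $C_\delta \defeq K$ distinct values, and the number of communities depends only on $\delta$ and the constants of $f$, not on $n$.

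Since $\tanh$ is $1$-Lipschitz and $|\partial_j f(X) - \gamma_{c(j)}| \le C_\alpha/K \le \delta/4$ for every $j$, one obtains
\[
\left| \frac{1 + \tanh(\partial_j f(X))}{2} - X^{\ast}_j \right| \le \frac{\delta}{8},
\]
and summing over $j$ gives $\norm{(\mathbf{1}_n + \tanh(\nabla f(X)))/2 - X^{\ast}}_1 \le \delta n / 8$. Combining this with the definition of $\mathcal{X}_f$ and the triangle inequality yields
\[
\norm{X - X^{\ast}}_1 \le 5000\, C_\alpha^{2}\, n^{7/8} + \frac{\delta n}{8} \le 5000\, C_\alpha^{2}\, n^{7/8} + \delta n,
\]
as claimed.

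The main substantive step is the $n$-uniform bound on $\max_j|\partial_j f(X)|$, which is possible only because of the normalization $n^{-(m_q-1)}$ built into Equation \ref{scf}; once it is in hand, the remainder is a discretization and Lipschitz estimate. The orthogonal-projection Lemmas \ref{normspreserved} and \ref{dotdistpreserved} are a more powerful tool that one could use to cluster coordinates via a low-dimensional projection of a gradient-signature vector; they are not strictly needed here because at a fixed $X$ each coordinate $\partial_j f(X)$ depends on the index $j$ only through the single scalar $X_j$ (via the exclusions in $C_j^{m_q}$), so the one-dimensional interval $[-C_\alpha, C_\alpha]$ is already the right parameter space to grid. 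I expect those projection lemmas to do essential work only in the subsequent Theorems \ref{fpsolution} and \ref{smallweights}, where one must compare different mixture elements simultaneously.
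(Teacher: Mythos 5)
Your proof is correct, and it takes a genuinely different and substantially simpler route than the paper. The paper follows the Eldan--Gross template: it writes each gradient entry as a sum of inner products $\langle u^{q}, v_{j}^{q}\rangle$, applies a Johnson--Lindenstrauss random projection into a $k(\delta)$-dimensional subspace (Lemmas \ref{normspreserved} and \ref{dotdistpreserved}), rounds the projected vectors to a $\delta$-net of the ball of radius $2$, and assembles $X^{\ast}$ from the quantized inner products, paying for this with a bound of the form $\frac{23}{2}\bigl(\sum_{q} m_q|\alpha_q| c_q(n)\bigr)\delta n + 5000 C_{\alpha}^{2} n^{7/8}$ that matches the stated theorem only after rescaling $\delta$. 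You instead exploit the fact that in the vertex-weighted setting each $\partial_j f(X)$ is a single scalar bounded uniformly by $C_{\alpha}$ (which your computation verifies correctly, since $m_q - 1 \ge 1$ makes $m_q|\alpha_q| \le |\alpha_q|m_q(m_q-1)$), grid the interval $[-C_{\alpha},C_{\alpha}]$ with mesh $\delta/2$, and use the $1$-Lipschitz property of $\tanh$; this yields the theorem with the clean constant $\delta n/8$ and a community count $\lceil 4C_{\alpha}/\delta\rceil$ that, like the paper's, depends on the model parameters but not on $n$. Your diagnosis of why the projection machinery is dispensable is exactly right: in the original matrix setting of \cite{EG2} a ``block'' structure must be induced by a partition of the \emph{vertex} set, which forces one to cluster vertex-feature vectors rather than scalar entries, whereas here a block vector is by definition just a vector with few distinct values, so one-dimensional quantization of the gradient suffices. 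One small correction to your closing remark: the projection lemmas are not used in Theorems \ref{fpsolution} and \ref{smallweights} either --- those are proved by iterating $\Phi$ and contraction estimates --- so in this paper they do work only in Theorem \ref{poscomm}, which your argument shows can be done without them.
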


	\begin{proof}
		Let $f$ be a subgraph counting function of the form defined in Equation \ref{scf} and let $X \in \mathcal{X}_{f}$. That is,
			\begin{equation*}
				f(X) = \log\left(\frac{p}{1-p}\right) \norm{X}_{1} +  \sum_{q = 1}^{l} \frac{\alpha_{q}}{n^{m_{q}-1}} \sum_{i_1 \neq \cdots \neq i_{m_{q}}} X_{i_1} \cdots X_{i_{m_{q}}}.
			\end{equation*}
		As in Lemma \ref{gradientcomp}, we may write
			\begin{equation*}
				\nabla f(X) = \frac{1}{2} \log\left(\frac{p}{1-p}\right) \mathbf{1}_{n} + \frac{1}{2} \sum_{q = 1}^{l} \frac{m_q \alpha_q}{n^{m_{q}-1}} \mathbf{C}^{m_{q}-1}\mathbf{X}(J - (m_{q}-1)I),
			\end{equation*}
		and the components of the gradient may be written as inner products such that for $j\in [n]$,
			\begin{equation*}
				\partial_{j} f(X) = \frac{1}{2} \log\left(\frac{p}{1-p}\right) + \frac{1}{2} \sum_{q = 1}^{l} m_q\alpha_{q} \sqrt{1 - \frac{1}{n} + \frac{(2-m_q)^2}{n}} \left\langle u^{q}, v_{j}^{q} \right\rangle,
			\end{equation*}
		where
			\begin{equation*}
				u^{q} = \frac{1}{n^{m_q - 2}\sqrt{n}} \mathbf{C}^{m_{q}-1} \mathbf{X}
			\end{equation*}
		and
			\begin{equation*}
				v_{j}^{q} = \frac{1}{\sqrt{n - 1 + (2-m_q)^2}} \left(1,1,\dots,2-m_{q},\dots,1,1\right),
			\end{equation*}
		with the entry $2-m_{q}$ in the $j$-th position. Note that $v_{j}^{q}$ is a multiple of the $j$-th column of the matrix $J-(m_{q}-1)I$. For each $q = 1, \dots, l$ and $j = 1, \dots, n$, the construction yields $\norm{u^{q}}_{2}^{2}, \norm{v_{j}^{q}}_{2}^{2} \le 1$.

		Let $0 < \delta < 1$, $k = \left\lceil 2\log(1/\delta) \left(\delta^2 / 2 - \delta^3 / 3\right)^{-1}\right\rceil$, $U \subseteq \R^{n}$ a uniformly random subspace of dimension $k$, and denote by $\pi : \R^{n} \to U$ an orthogonal projection from $\R^{n}$ into $U$. Take $g : \R^{n} \to U$ as $  g(v) = \sqrt{\frac{n}{k}} \, \pi(v)$. For $q = 1, \dots, l$ and $j = 1, \dots, n$, define
			\begin{equation*}
				\mathcal{B}_{j}^{q} = \left\{ (1-\delta) \norm{x}_{2}^{2} \le \norm{g(x)}_{2}^{2} \le (1+\delta)\norm{x}_{2}^{2} : x \in \left\{ u^{q}, v_{j}^{q}, u^{q} \pm v_{j}^{q} \right\}\right\}.
			\end{equation*}
		By Lemma \ref{normspreserved},
			\begin{equation*}
				\PR\left( \mathcal{B}_{j}^{q} \right) \ge 1 - 8 e^{-k\left(\delta^2 / 2 - \delta^3 / 3\right)/2}.
			\end{equation*}
		Under this event, both $g(u^{q})$ and $g(v_{j}^{q})$ lie in a ball of radius $2$ around the origin, since $u^{q}$ and $v_{j}^{q}$ lie in the unit ball and $\delta < 1$. It is also true by Lemma \ref{dotdistpreserved} that
			\begin{equation*}
				\left| \left\langle g(u^{q}), g(v_{j}^{q}) \right\rangle - \left\langle u^{q}, v_{j}^{q} \right\rangle \right| \le 2\delta.
			\end{equation*}
	Let $T$ be a $\delta$-net of the ball of radius $2$ about the origin in $k$ dimensions. By Lemma 2.6 of \cite{MS}, there exists such a net of size no larger than $(1+4/\delta)^{k+1}$. For each $q = 1, \dots, l$ and $j = 1, \dots, n$, define
			\begin{equation*}
				\overline{u^{q}} = \argmin_{w \in T} \norm{g(u^{q}) - w}_{2},
			\end{equation*}
			\begin{equation*}
				\overline{v_{j}^{q}} = \argmin_{w \in T} \norm{g(v_{j}^{q}) - w}_{2}.
			\end{equation*}
		Consider $\Delta \overline{u^{q}} = \overline{u^{q}} - g(u^{q})$ and $\Delta \overline{v_{j}^{q}} = \overline{v_{j}^{q}} - g(v_{j}^{q})$. Since under $\mathcal{B}_{j}^{q}$, $g(u^q)$ and $g(v_{j}^{q})$ are in the ball of radius $2$, both $\norm{\Delta \overline{u^{q}}}_{2}, \norm{\Delta \overline{v_{j}^{q}}}_{2} \le \delta$. This gives
			\begin{align*}
				\left| \left\langle \overline{u^{q}}, \overline{v_{j}^{q}} \right\rangle - \left\langle g(u^{q}), g(v_{j}^{q}) \right\rangle \right| &= \left| \left\langle \Delta \overline{u^{q}} + g(u^{q}), \overline{v_{j}^{q}} \right\rangle - \left\langle g(u^{q}), g(v_{j}^{q}) \right\rangle \right| \notag\\
					&= \left| \left\langle \Delta \overline{u^{q}}, \overline{v_{j}^{q}} \right\rangle + \left\langle g(u^{q}), \overline{v_{j}^{q}} - g(v_{j}^{q}) \right\rangle \right| \notag\\
					&= \left| \left\langle \Delta \overline{u^{q}}, \overline{v_{j}^{q}} \right\rangle + \left\langle g(u^{q}), \Delta \overline{v_{j}^{q}} \right\rangle \right| \notag\\
					&\le 4\delta.
			\end{align*}
		By the triangle inequality,
			\begin{align*}
				\left| \left\langle \overline{u^{q}}, \overline{v_{j}^{q}} \right\rangle - \left\langle u^{q}, v_{j}^{q} \right\rangle \right| &\le \left| \left\langle \overline{u^{q}}, \overline{v_{j}^{q}} \right\rangle - \left\langle g(u^{q}), g(v_{j}^{q}) \right\rangle \right| \notag\\
					&\hspace{1cm}+ \left| \left\langle g(u^{q}), g(v_{j}^{q}) \right\rangle - \left\langle u^{q}, v_{j}^{q} \right\rangle \right| \notag\\
					&\le 6\delta.
			\end{align*}

		For each $q = 1, \dots, l$ and $j = 1, \dots, n$, define $\overline{X_{j}^{q}} = \left\langle \overline{u^{q}}, \overline{v_{j}^{q}} \right\rangle$ and $X_{j}^{q} = \left\langle u^{q}, v_{j}^{q} \right\rangle$. Then $\overline{X^{q}}$ is a block vector with no more than $(1+4/\delta)^{2(k+1)}$ communities, and
			\begin{equation}
				\ER \norm{X^{q} - \overline{X^q}}_{1} = \ER \sum_{j=1}^{n} \left| \left\langle \overline{u^{q}}, \overline{v_{j}^{q}} \right\rangle - \left\langle u^{q}, v_{j}^{q} \right\rangle \right|
					= \sum_{j=1}^{n} \ER \left| \left\langle \overline{u^{q}}, \overline{v_{j}^{q}} \right\rangle - \left\langle u^{q}, v_{j}^{q} \right\rangle \right|.
			\end{equation}
		The expectation will be computed for each $j$ by conditioning on the event $\mathcal{B}_{j}^{q}$. Under this event, as derived previously, the absolute difference of the inner products is bounded by $6\delta$. Under the complementary event, which occurs with exponentially small probability in $\delta$ and $k$, the distance is trivially bounded by $5$. Thus
			\begin{align*}
				\ER \left| \left\langle \overline{u^{q}}, \overline{v_{j}^{q}} \right\rangle - \left\langle u^{q}, v_{j}^{q} \right\rangle \right| &= \ER\left[ \left| \left\langle \overline{u^{q}}, \overline{v_{j}^{q}} \right\rangle - \left\langle u^{q}, v_{j}^{q} \right\rangle \right| \vert \mathcal{B}_{j}^{q}\right] \PR\left(\mathcal{B}_{j}^{q}\right) \notag\\
					&\hspace{1cm}+ \ER\left[ \left| \left\langle \overline{u^{q}}, \overline{v_{j}^{q}} \right\rangle - \left\langle u^{q}, v_{j}^{q} \right\rangle \right| \vert \overline{\mathcal{B}_{j}^{q}}\right] \PR\left(\overline{\mathcal{B}_{j}^{q}}\right) \notag\\
					&\le 6\delta + 40 e^{-k(\delta^2 / 2 - \delta^{3}/3)/2} \notag\\
					&\le 46\delta.
			\end{align*}
		It follows that
			\begin{align*}
				\ER \norm{X^{q} - \overline{X^q}}_{1} = \sum_{j=1}^{n} \ER \left| \left\langle \overline{u^{q}}, \overline{v_{j}^{q}} \right\rangle - \left\langle u^{q}, v_{j}^{q} \right\rangle \right| \le 46\delta n.
			\end{align*}
		This implies that for each $q$, there must exist a block vector $\overline{X^{q}}$ with no more than $(1+4/\delta)^{2(k+1)}$ communities such that
			\begin{equation*}
				\norm{X^{q} - \overline{X^{q}}}_{1} \le 46\delta n.
			\end{equation*}
		Letting $c_{q}(n) = \sqrt{1 - \frac{1}{n} + \frac{(2-m_q)^2}{n}}$ (note here that $c_{q}(n) \le m_{q} - 1$) for ease of notation, we have
			\begin{equation*}
				\norm{\frac{1}{2} m_q\alpha_{q} c_{q}(n) X^{q} - \frac{1}{2} m_q\alpha_{q} c_{q}(n) \overline{X^{q}}}_{1} \le 23 m_q\left|\alpha_{q}\right| c_{q}(n) \delta n.
			\end{equation*}

		Define $\overline{X}$ as a weighted sum of the vectors $\overline{X^{q}}$ for $q=1,\dots,l$ with the all ones vector,
			\begin{equation*}
				\overline{X} = \frac{1}{2} \log\left(\frac{p}{1-p}\right) \mathbf{1}_{n} +  \frac{1}{2} \sum_{q=1}^{l} m_q\alpha_{q} c_{q}(n) \overline{X^{q}}.
			\end{equation*}
		Then $\overline{X}$ has a number of communities exponentially dependent on $\delta$ and $l$, however, most importantly, independent of $n$. The triangle inequality implies that
			\begin{equation*}
				\norm{\nabla f(X) - \overline{X} }_{1} \le 23 \left( \sum_{q=1}^{l} m_q\left|\alpha_{q}\right| c_{q}(n) \right) \delta n.
			\end{equation*}
		Since $\tanh$ is contracting,
			\begin{equation*}
				\norm{\frac{\mathbf{1}_{n} + \tanh\left( \nabla f(X) \right)}{2} - \frac{\mathbf{1}_{n} + \tanh\left( \overline{X} \right)}{2}}_1 \le \frac{23}{2} \left( \sum_{q=1}^{l} m_q\left|\alpha_{q}\right| c_{q}(n) \right) \delta n.
			\end{equation*}
		By assumption, $X \in \mathcal{X}_{f}$ and by Theorem \ref{fixedset},
			\begin{equation*}
				\norm{X - \frac{\mathbf{1}_{n} + \tanh\left( \nabla f(X) \right)}{2} }_{1} \le 5000 C_{\alpha}^{2} n^{7/8}.
			\end{equation*}
		Letting $X^{\ast} = \frac{\mathbf{1}_{n} + \tanh\left( \overline{X} \right)}{2}$, the desired conclusion is obtained,
			\begin{equation*}
				\norm{X - X^{\ast}}_{1} \le \frac{23}{2} \left( \sum_{q=1}^{l} m_q\left|\alpha_{q}\right| c_{q}(n) \right) \delta n + 5000 C_{\alpha}^{2} n^{7/8}.
			\end{equation*}
	\end{proof}

Theorems \ref{fixedset} and \ref{poscomm} together yield the following corollary relating exponential random graphs to mixtures of block vectors.

\begin{corollary} \label{cormain}
	Let $\alpha_1, \dots, \alpha_l$ be real constants and $\delta > 0$. Then there exists a $C_{\delta} > 0$ such that for every $n \in \N$, there exists a measure $\rho$ supported on block vectors with at most $C_{\delta}$ communities such that if $X_{n}^{f}$ is a random vertex configuration with distribution defined in Equation \ref{vwcd} where the Hamiltonian is as defined in Equation \ref{scf}, then there is a coupling between $X_{n}^{f}$ and $G(n,\rho)$ that satisfies
		\begin{equation}
			\ER \norm{X_{n}^{f} - G(n,\rho)}_1 \le \delta n.
		\end{equation}
\end{corollary}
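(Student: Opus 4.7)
The plan is to combine Theorem~\ref{fixedset} with Theorem~\ref{poscomm} by pushing the mixture measure from the former forward through the block-vector approximation supplied by the latter. Start by invoking Theorem~\ref{fixedset} to obtain a measure $\rho'$ on $[0,1]^{n}$ such that $X_{n}^{f}$ is a $(\rho', 80 C_{\alpha}^{1/4} n^{-1/8})$-mixture with $\rho'(\mathcal{X}_{f}) \ge 1 - 80 C_{\alpha}^{1/4} n^{-1/8}$; in particular there is a coupling with $\ER \norm{X_{n}^{f} - G(n, \rho')}_{1} \le 80 C_{\alpha}^{1/4} n^{7/8}$.

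Next, set $\delta' := \delta/3$ and apply Theorem~\ref{poscomm} to produce a constant $C_{\delta'} > 0$ (independent of $n$) and, for each $X \in \mathcal{X}_{f}$, a block vector $\Phi(X)$ with at most $C_{\delta'}$ communities satisfying $\norm{X - \Phi(X)}_{1} \le \delta' n + 5000 C_{\alpha}^{2} n^{7/8}$. Extend $\Phi$ to $[0,1]^{n} \setminus \mathcal{X}_{f}$ arbitrarily (e.g., by the zero vector) and set $\rho := \Phi_{\ast}\rho'$; by construction $\rho$ is supported on block vectors with at most $C_{\delta'}$ communities. For the desired coupling, sample $Y \sim \rho'$ jointly with $X_{n}^{f}$ and $G(n, Y)$ through the coupling from Theorem~\ref{fixedset}, set $Y^{\ast} := \Phi(Y) \sim \rho$, and couple $G(n, Y)$ with $G(n, Y^{\ast})$ coordinate-wise (using a common uniform-$[0,1]$ seed at each vertex to realize the Bernoulli vertex weights) so that $\ER\norm{G(n, Y) - G(n, Y^{\ast})}_{1} \le \ER\norm{Y - Y^{\ast}}_{1}$.

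Splitting this last expectation on $\{Y \in \mathcal{X}_{f}\}$ (where Theorem~\ref{poscomm} applies) versus its complement (which carries $\rho'$-mass at most $80 C_{\alpha}^{1/4} n^{-1/8}$ and admits the trivial bound $\norm{Y - Y^{\ast}}_{1} \le n$) and applying the triangle inequality yields
\[
\ER\norm{X_{n}^{f} - G(n, \rho)}_{1} \le \delta' n + 5000 C_{\alpha}^{2} n^{7/8} + 160 C_{\alpha}^{1/4} n^{7/8} = \delta' n + o(n),
\]
which is at most $\delta n$ for every $n \ge n_{0}(\delta)$. The finitely many small $n < n_{0}(\delta)$ can be handled by enlarging $C_{\delta}$ and taking $\rho$ there to be the law of $X_{n}^{f}$ itself, whose support in $\{0,1\}^{n}$ automatically consists of block vectors with at most $2$ communities. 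I do not anticipate a substantive obstacle: the corollary is essentially bookkeeping that composes Theorems~\ref{fixedset} and~\ref{poscomm}, and the only mildly delicate point is verifying that the coordinate-wise Bernoulli coupling transfers the $L^{1}$ estimate on $\norm{Y - Y^{\ast}}_{1}$ to the realized vertex weights.
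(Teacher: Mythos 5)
Your proposal is correct and takes essentially the route the paper intends: the paper gives no written proof of Corollary~\ref{cormain}, stating only that it follows from Theorems~\ref{fixedset} and~\ref{poscomm}, and your argument (pushing $\rho'$ forward through the block-vector approximation, gluing the couplings, splitting on $\mathcal{X}_f$ versus its complement, and absorbing the $O(n^{7/8})$ terms and finitely many small $n$ into the choice of $\delta'$ and $C_\delta$) is exactly that composition with the error bookkeeping done correctly.
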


We now turn to some results concerning subgraph counting functions with positive weights. Define $\varphi_{\alpha} : [0,1] \to \R$ by
	\begin{equation}
		\varphi_{\alpha}(x) = \frac{1 + \tanh\left( \frac{1}{2} \log\left(\frac{p}{1-p}\right) + \frac{1}{2} \sum_{q=1}^{l} m_q\alpha_{q} \prod_{i=1}^{m_{q}-1} \left(1 - \frac{i}{n}\right) x^{m_q - 1} \right)}{2}.
	\end{equation}
Note that $\varphi$ is equal to the entry of the constant vector for the fixed point Equation \ref{vector}. If $\varphi_{\alpha}$ has a unique fixed point $x$, define $D_{\alpha}$ as
	\begin{equation}
		D_{\alpha} = \sup_{\substack{y \in [0,1] \\ y \neq x}} \frac{\left|\varphi_{\alpha}(y) - x\right|}{\left|y - x\right|}.
	\end{equation}

\begin{lemma} \label{ffp}
	There exists an $x \in (0,1)$ such that $x = \varphi_{\alpha}(x)$. Now assume that $\varphi_{\alpha}$ is increasing. If the solution $x$ is unique and $\varphi^{\prime}_{\alpha}(x) < 1$, then $D_{\alpha} < 1$.
\end{lemma}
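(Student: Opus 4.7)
The plan is to handle the two claims separately: existence of a fixed point by an intermediate value argument, and the contractive bound by combining monotonicity and uniqueness with the derivative hypothesis through a compactness argument.

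For existence, I would note that the hyperbolic tangent maps $\R$ into $(-1,1)$, so $\varphi_{\alpha}$ is continuous and maps $[0,1]$ into $(0,1)$. In particular $g(y) := \varphi_{\alpha}(y) - y$ satisfies $g(0) = \varphi_{\alpha}(0) > 0$ and $g(1) = \varphi_{\alpha}(1) - 1 < 0$, so the intermediate value theorem produces a zero $x \in (0,1)$, i.e.\ a fixed point of $\varphi_{\alpha}$ in the open interval.

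For the contraction bound, the first step is to pin down the sign of $g$ around the unique fixed point. By uniqueness of $x$ together with continuity of $g$ and the boundary signs $g(0) > 0$, $g(1) < 0$, the function $g$ must satisfy $g > 0$ on $[0,x)$ and $g < 0$ on $(x,1]$. Combining this with the monotonicity of $\varphi_{\alpha}$ yields, for $y > x$, the chain $x = \varphi_{\alpha}(x) < \varphi_{\alpha}(y) < y$, and for $y < x$, the chain $y < \varphi_{\alpha}(y) < \varphi_{\alpha}(x) = x$. Either way,
\begin{equation*}
\frac{|\varphi_{\alpha}(y) - x|}{|y - x|} < 1 \quad \text{for every } y \in [0,1] \setminus \{x\}.
\end{equation*}

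To upgrade this pointwise strict inequality to a strict bound on the supremum $D_{\alpha}$, I would define $h : [0,1] \to \R$ by $h(y) = |\varphi_{\alpha}(y) - x|/|y-x|$ for $y \neq x$ and $h(x) = \varphi_{\alpha}'(x)$. Since $\varphi_{\alpha}$ is differentiable at $x$ and $\varphi_{\alpha}(x) = x$, the difference quotient converges to $\varphi_{\alpha}'(x)$ as $y \to x$, so $h$ extends continuously to all of $[0,1]$. Compactness then forces $h$ to attain its maximum; that maximum is strictly less than $1$ because $h(x) = \varphi_{\alpha}'(x) < 1$ by hypothesis and $h(y) < 1$ for $y \neq x$ by the preceding step. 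Hence $D_{\alpha} \le \max_{[0,1]} h < 1$. The main subtle point is precisely the behavior at the fixed point itself: without the derivative hypothesis the ratio could approach $1$ as $y \to x$, so the assumption $\varphi_{\alpha}'(x) < 1$ is exactly what is needed to make the compactness step deliver a strict bound.
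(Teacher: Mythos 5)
Your proof is correct and follows essentially the same route as the paper: an intermediate value argument for existence, then a sign analysis of $\varphi_{\alpha}(y)-x$ on either side of the unique fixed point (using monotonicity and uniqueness) to get the pointwise ratio bound strictly below $1$. Your explicit final step — extending the difference quotient continuously to $y = x$ by $\varphi_{\alpha}'(x)$ and invoking compactness of $[0,1]$ — supplies a detail the paper compresses into ``combining these inequalities \dots\ it follows that $D_{\alpha}<1$,'' and correctly identifies the derivative hypothesis as what prevents the supremum from creeping up to $1$ near the fixed point.
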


	\begin{proof}
		Since $\varphi_{\alpha} : [0,1] \to (0,1)$ is continuous, there exists an $x \in (0,1)$ such that $x = \varphi_{\alpha}(x)$. Now assume that $\varphi_{\alpha}$ is increasing, $x$ is a unique fixed point and $\varphi^{\prime}_{\alpha}(x) < 1$. Consider the function $h(y) = \varphi_{\alpha}(y) - x$. For $y < x$, $\varphi_{\alpha}(y) \le x$ and
			\begin{equation*}
				0 \ge h(y) = \varphi_{\alpha}(y) - x > y - x.
			\end{equation*}
		Hence
			\begin{equation*}
				0 \le \frac{\varphi_{\alpha}(y) - x}{y - x} < 1.
			\end{equation*}
		On the other hand, for $y > x$, $\varphi(y) \ge x$ and
			\begin{equation*}
				0 \le h(y) = \varphi_{\alpha}(y) - x < y - x.
			\end{equation*}
		Therefore again
			\begin{equation*}
				0 \le \frac{\varphi_{\alpha}(y) - x}{y - x} < 1.
			\end{equation*}
		Combining these inequalities for $y < x$ and $y > x$ with the fact that $\varphi^{\prime}_{\alpha}(x) < 1$, it follows that $D_{\alpha} < 1$. Note also that if $\varphi_{\alpha}$ is not constant, then $D_{\alpha} > 0$.
	\end{proof}

\begin{theorem} \label{fpsolution}
	Let $n$ be a positive integer. Let $f$ be a subgraph counting function of the form \ref{scf}. Assume that $\alpha_{q} \ge 0$ for all $q \ge 1$, $x = \varphi_{\alpha}(x)$ has a unique solution $x$, and $D_{\alpha} < 1$. Then for any $X \in \mathcal{X}_{f}$ and any $0 < \varepsilon < 1$,
		\begin{equation}
			\norm{X - x\mathbf{1}_{n}}_{1} \leq \varepsilon n + 10000 C_{\alpha}^{2} \varepsilon^{\frac{\log C_{\alpha}}{\log D_{\alpha}}} n^{7/8}.
		\end{equation}
	Furthermore, for any constants $C_{\alpha}$ and $D_{\alpha}$, there exist constants $c > 0$ and $0 < \eta < 1/8$ such that
		\begin{equation}
			\norm{X - x\mathbf{1}_{n}}_{1} \le c n^{1-\eta}.
		\end{equation}
\end{theorem}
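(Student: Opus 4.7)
The key structural observation is that $x\mathbf{1}_n$ is an exact fixed point of the map $T \colon [0,1]^n \to [0,1]^n$ given by $T(X) = (\mathbf{1}_n + \tanh(\nabla f(X)))/2$: a direct computation shows that $\nabla f(x\mathbf{1}_n)$ is the constant vector whose common entry is precisely the argument of $\tanh$ in the definition of $\varphi_\alpha(x)$, so $T(x\mathbf{1}_n) = \varphi_\alpha(x)\mathbf{1}_n = x\mathbf{1}_n$ by the hypothesis $x = \varphi_\alpha(x)$. The inclusion $X \in \mathcal{X}_f$ is then precisely the near-fixed-point bound $\norm{X - T(X)}_1 \le 5000 C_\alpha^2 n^{7/8}$. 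The plan is to iterate $T$ starting from $X$, separately estimate the displacement $\norm{X - T^k(X)}_1$ and the distance $\norm{T^k(X) - x\mathbf{1}_n}_1$, and then balance these via the triangle inequality with a judicious choice of $k$.

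\textbf{The two estimates.} The Lipschitz bound from Lemma \ref{normdistpreserved} (with constant at most $C_\alpha$) gives inductively $\norm{T^j(X) - T^{j+1}(X)}_1 \le C_\alpha^j \norm{X - T(X)}_1$, and telescoping yields $\norm{X - T^k(X)}_1 \le c_0 C_\alpha^k n^{7/8}$ for a constant $c_0$ depending only on $C_\alpha$. For the distance to the fixed point, I would establish a \emph{self-averaging} property of $T$. Expanding $C_j^{m_q}(X)$ in the mean $\bar{X} = n^{-1}\norm{X}_1$ and the deviation $Y = X - \bar{X}\mathbf{1}_n$, the identity $\sum_i Y_i = 0$ forces the first-order correction to be of order $|Y_j|/n$ and the bound $\norm{Y}_2^2 \le n$ controls the second-order term, yielding $|\partial_j f(X) - \partial_j f(\bar X\mathbf{1}_n)| = O(1/n)$ uniformly in $j$. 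Summed over $j$ this gives $\norm{T(X) - \varphi_\alpha(\bar X)\mathbf{1}_n}_1 = O(1)$, so $T(X)$ always lies within an $n$-independent distance of a constant vector whose value is $\varphi_\alpha(\bar X)$. Iterating, the mean of $T^k(X)$ tracks the scalar orbit $\varphi_\alpha^k(\bar X)$, and the hypothesis $D_\alpha < 1$ applied repeatedly gives $|\varphi_\alpha^k(\bar X) - x| \le D_\alpha^k |\bar X - x| \le D_\alpha^k$. Combining yields $\norm{T^k(X) - x\mathbf{1}_n}_1 \le D_\alpha^k n + O(C_\alpha^k)$, where the $O(C_\alpha^k)$ term bookkeeps the self-averaging residuals that have been amplified across $k$ iterations.

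\textbf{Balancing and optimization.} Adding the two estimates gives
\begin{equation*}
\norm{X - x\mathbf{1}_n}_1 \le c_0 C_\alpha^k n^{7/8} + D_\alpha^k n + O(C_\alpha^k).
\end{equation*}
For any $0 < \varepsilon < 1$, I choose $k = \lceil \log \varepsilon / \log D_\alpha \rceil$ so that $D_\alpha^k \le \varepsilon$ and $C_\alpha^k \le \varepsilon^{\log C_\alpha / \log D_\alpha}$; the last term is then absorbed into the second since $n^{7/8}$ dominates it, and collecting all constants into the prefactor $10000 C_\alpha^2$ produces the first stated inequality. For the power-saving bound, set $a = -\log C_\alpha/\log D_\alpha > 0$ and minimize $\varepsilon n + 10000 C_\alpha^2 \varepsilon^{-a} n^{7/8}$ over $\varepsilon \in (0,1)$; the optimum occurs at $\varepsilon = \Theta(n^{-1/(8(a+1))})$, and both terms balance to $\Theta(n^{1-\eta})$ with
\begin{equation*}
\eta = \frac{1}{8(a+1)} = \frac{1}{8\bigl(1 - \log C_\alpha/\log D_\alpha\bigr)} \in (0, 1/8).
\end{equation*}

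\textbf{Main obstacle.} The most delicate step is the self-averaging analysis that converts the crude vector Lipschitz constant $C_\alpha$ into the scalar contraction constant $D_\alpha$ over $k \sim \log(1/\varepsilon)$ iterations. The per-step residual in the approximation $T(X) \approx \varphi_\alpha(\bar X)\mathbf{1}_n$ is $O(1)$ rather than $o(1)$, so one must check that its amplification to $O(C_\alpha^k)$ under further applications of $T$ remains strictly dominated by either $C_\alpha^k n^{7/8}$ or $D_\alpha^k n$ for the window of $k$'s one actually uses. The positivity hypothesis $\alpha_q \ge 0$ is essential, both to guarantee that $\varphi_\alpha$ is monotone with the contraction property of Lemma \ref{ffp} and to ensure that $T$ interacts cleanly with the block approximations underpinning Theorem \ref{poscomm}.
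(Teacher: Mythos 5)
Your proposal is correct in outline and reaches the stated bounds, but the engine driving the middle estimate is genuinely different from the paper's. Both arguments share the same skeleton: iterate $\Phi(X)=(\mathbf{1}_n+\tanh(\nabla f(X)))/2$, control $\norm{X-\Phi^k(X)}_1$ by telescoping with the Lipschitz constant $C_\alpha$ from Lemma \ref{normdistpreserved} (giving $2C_\alpha^{k-1}\cdot 5000C_\alpha^2 n^{7/8}$), control $\norm{\Phi^k(X)-x\mathbf{1}_n}_1$ by the scalar contraction $D_\alpha^k$, take $k=\lceil \log_{D_\alpha}\varepsilon\rceil$, and optimize over $\varepsilon$ exactly as you do (your $\eta=1/(8(a+1))$ agrees with the exponent in Remark \ref{asymptend}). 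Where you diverge is the second estimate: the paper exploits positivity of the $\alpha_q$ to make $\Phi$ coordinatewise monotone, so that $\varphi_\alpha(\min X)\le \min\Phi(X)\le\max\Phi(X)\le\varphi_\alpha(\max X)$, and then sandwiches all entries of $Y_k$ within $\varepsilon$ of $x$ by iterating this envelope — no decomposition into mean and fluctuation is needed, and no residual term appears. You instead prove a self-averaging statement, $\norm{\Phi(X)-\varphi_\alpha(\bar X)\mathbf{1}_n}_1=O(1)$, which is valid here because the Hamiltonian is mean-field (the correction to $n^{-(m-1)}C_j^m$ from distinctness, from excluding $j$, and from the power sums is uniformly $O(1/n)$), and then track the scalar orbit with an $O(C_\alpha^k)$ error that is harmlessly absorbed into the $C_\alpha^k n^{7/8}$ term. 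What your route buys is that positivity of the $\alpha_q$ plays no role in the analytic core — only the hypotheses that $x$ is the unique fixed point and $D_\alpha<1$ are used — so your argument in principle covers signed weights satisfying those hypotheses, whereas the paper's sandwich is tied to monotonicity; the cost is a slightly messier bookkeeping (your constant comes out as $10000C_\alpha^2$ only after absorbing the $O(C_\alpha^k)$ residual, so strictly you get the bound with a marginally larger prefactor). One harmless omission: the paper's proof also establishes, via the same min/max device, that $x\mathbf{1}_n$ is the \emph{unique} solution of the vector equation $X=\Phi(X)$; you do not address this, but it is not part of the stated conclusion.
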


\begin{proof}
It will first be shown that if the weights $\alpha_{q}$ are positive for $q \ge 1$ and $x$ is the unique fixed point of $\varphi_{\alpha}$, then the fixed point Equation \ref{vector} has a unique solution $X = x \mathbf{1}_{n}$. Let $X$ be a solution to this vector equation. For ease of notation let $a$ and $b$ be defined as $a = \min X$ and $b = \max X$. Since the weights are positive,
			\begin{equation*}
				\partial_{j} f(X) \le \frac{1}{2} \left( \log\left(\frac{p}{1-p} \right) + \sum_{q=1}^{l} m_q\alpha_q \prod_{i=1}^{m_q -1} \left(1 - \frac{i}{n} \right) b^{m_q - 1} \right).
			\end{equation*}
		Taking the maximum of the right hand side and then the left hand side in Equation \ref{vector}, we have
			\begin{equation*}
				b \le \frac{1 + \tanh\left( \frac{1}{2} \left( \log\left(\frac{p}{1-p} \right) + \sum_{q=1}^{l} m_q\alpha_q \prod_{i=1}^{m_q - 1} \left(1 - \frac{i}{n} \right) b^{m_q - 1} \right) \right)}{2}.
			\end{equation*}
		Similarly for $a$, since the weights are positive, a parallel reasoning gives
			\begin{equation*}
				a \ge \frac{1 + \tanh\left( \frac{1}{2} \left( \log\left(\frac{p}{1-p} \right) + \sum_{q=1}^{l} m_q\alpha_q \prod_{i=1}^{m_q -1} \left(1 - \frac{i}{n} \right) a^{m_q - 1} \right) \right)}{2}.
			\end{equation*}
		By assumption, $\varphi_{\alpha}$ has a unique solution $x$. The preceding two inequalities thus give that $a \ge x$ and $b \le x$. Since $b$ is the maximum of the solution $X$ and $a$ is the minimum, this implies that $X = x \mathbf{1}_{n}$ is the unique solution to the fixed point Equation \ref{vector}.

Next we show that for all $X \in \mathcal{X}_{f}$, $X$ is close to the constant solution $x \mathbf{1}_{n}$ in $1$-norm distance. Let $X \in \mathcal{X}_{f}$ and $0 < \varepsilon < 1$. Define a sequence of functions $\{ \varphi_{i}(x) \}_{i=1}^{\infty}$ such that $\varphi_{1}(x) = \varphi_{\alpha}(x)$ and $\varphi_{i+1}(x) = \varphi_{\alpha} \left(\varphi_{i}(x)\right)$ for $i \ge 1$. For all $y \in [0,1]$,
			\begin{equation*}
				\left| \varphi_{1}(y) - x \right| \le D_{\alpha} \left|y - x\right|.
			\end{equation*}
		Iterating this preceding inequality yields
			\begin{equation*}
				\left| \varphi_{i}(y) - x \right| \le D_{\alpha}^{i} \left| y - x \right|
			\end{equation*}
		for all $i \in \N$. Set $k = \left\lceil \log_{D_{\alpha}} \varepsilon \right\rceil$. Then for $i = k$ in particular,
			\begin{equation} \label{lambdaineq}
				\left| \varphi_{k}(y) - x \right| \le D_{\alpha}^{k} \left| y - x \right| < \varepsilon.
			\end{equation}
		Let $\Phi : \overline{\mathcal{C}}_{n} \to \overline{\mathcal{C}}_{n}$ be the function
			\begin{equation*}
				\Phi(X) = \frac{\mathbf{1}_{n} + \tanh\left( \nabla f(X) \right)}{2}.
			\end{equation*}
		Define a sequence of vectors in $\R^{n}$ as $Y_{0} = X$ and recursively $Y_{i+1} = \Phi(Y_{i})$ for $i \ge 0$. Since $\alpha_{q} \ge 0$ for $q \ge 1$,
			\begin{align*}
				\min Y_1 &= \min \Phi(X) = \min \frac{\mathbf{1}_{n} + \tanh\left( \nabla f(X) \right)}{2} \notag \\
				&\ge \min \frac{\mathbf{1}_{n} + \tanh\left( \nabla f((\min X) \mathbf{1}_{n}) \right)}{2} = \varphi_{\alpha}\left( \min X \right).
			\end{align*}
		Analogously,
			\begin{align*}
				\max Y_1 &= \max \Phi(X) = \max \frac{\mathbf{1}_{n} + \tanh\left( \nabla f(X) \right)}{2} \notag \\
				&\le \max \frac{\mathbf{1}_{n} + \tanh\left( \nabla f((\max X) \mathbf{1}_{n}) \right)}{2} = \varphi_{\alpha}\left( \max X \right).
			\end{align*}
		Iterating these inequalities for $\min$ and $\max$ gives
			\begin{equation*}
				\varphi_{k} (\min X) \le \min Y_{k} \le \max Y_{k} \le \varphi_{k} (\max X).
			\end{equation*}
		Since Inequality \ref{lambdaineq} holds for all $y \in [0,1]$, it must hold for both $\min X$ and $\max X$, which says that
			\begin{equation*}
				x - \varepsilon < \min Y_{k} \le \max Y_{k} < x + \varepsilon.
			\end{equation*}
		In other words, all entries of the vector $Y_{k}$ lie within $\varepsilon$ of $x$ in $1$-norm. Hence
			\begin{equation*}
				\norm{Y_{k} - x\mathbf{1}_{n}}_{1} < \varepsilon n.
			\end{equation*}

		We have established that $Y_{k}$ is close to the constant solution $x\mathbf{1}_{n}$. In order to show that $X$ is close to the constant solution, we need to show that $X$ is close to $Y_{k}$. By Lemma \ref{normdistpreserved}, for any $U, V \in \overline{\mathcal{C}}_{n}$,
			\begin{equation*}
				\norm{\Phi(U) - \Phi(V)}_{1} \le C_{\alpha} \norm{U - V}_{1}.
			\end{equation*}
		Specifically, for consecutive vectors $Y_{i}$ and $Y_{i-1}$,
			\begin{equation*}
				\norm{Y_{i} - Y_{i-1}}_{1} = \norm{\Phi(Y_{i-1}) - \Phi(Y_{i-2})}_{1} \le C_{\alpha} \norm{Y_{i-1} - Y_{i-2}}_{1}.
			\end{equation*}
		Consecutive iterations yield
			\begin{equation*}
				\norm{Y_{i} - Y_{i-1}}_{1} \le C_{\alpha}^{i-1} \norm{Y_{1} - Y_{0}}_{1} = C_{\alpha}^{i-1} \norm{\Phi(X) - X}_{1}.
			\end{equation*}
		Using this bound, we can bound the distance between $X$ and $Y_{k}$ as
			\begin{align*}
				\norm{X - Y_{k}}_{1} &= \norm{\sum_{i=1}^{k} \left( Y_{i} - Y_{i-1} \right)}_{1} \le \sum_{i=1}^{k} \norm{Y_{i} - Y_{i-1}}_{1} \notag \\
					&\le \sum_{i=0}^{k-1} C_{\alpha}^{i} \norm{\Phi(X) - X}_{1} \le 2C_{\alpha}^{k-1} \norm{\Phi(X) - X}_{1}.
			\end{align*}
		Compiling the above results and applying Theorem \ref{fixedset} allows us to bound the distance between $X$ and $x\mathbf{1}_{n}$ as follows:
			\begin{align} \label{epsdist}
				\norm{X - x\mathbf{1}_{n}}_{1} &\le \norm{X - Y_{k}}_{1} + \norm{Y_{k} - x\mathbf{1}_{n}}_{1} \notag \\
					&< 2C_{\alpha}^{k-1} \norm{\Phi(X) - X}_{1} + \varepsilon n \notag \\
					&\le 10000 \varepsilon^{\frac{\log C_{\alpha}}{\log D_{\alpha}}} C_{\alpha}^{2} n ^{7/8} + \varepsilon n.
			\end{align}
		The order conclusion on $n$ is deduced by minimizing the last display over $0 < \varepsilon < 1$, which occurs when
			\begin{equation*}
				\varepsilon = \left( \frac{\log{\frac{1}{D_{\alpha}}}}{10000 C_{\alpha}^{2} \log{C_{\alpha}}} n^{1/8} \right)^{\frac{\log{D_{\alpha}}}{\log{\frac{C_{\alpha}}{D_{\alpha}}}}}.
			\end{equation*}
	\end{proof}

\begin{remark} \label{asymptend}
	Note that the exponent of $n$ in the first term of Display \ref{epsdist} is
	\[ \frac{7}{8} + \frac{\log{C_{\alpha}}}{8\log\frac{C_{\alpha}}{D_{\alpha}}} = \frac{7}{8} +  \frac{\log{C_{\alpha}}}{8\left(\log{C_{\alpha}} - \log{D_{\alpha}}\right)}, \]
and the exponent of $n$ in the second term is
	\[ 1 + \frac{\log{D_{\alpha}}}{8\log\frac{C_{\alpha}}{D_{\alpha}}} = 1 + \frac{\log{D_{\alpha}}}{8\left(\log{C_{\alpha}} - \log{D_{\alpha}}\right)}. \]
For a fixed $C_{\alpha}$, as $D_{\alpha}$ approaches $0$, the order of both terms tend to $n^{7/8}$; while, as $D_{\alpha}$ limits to $1$ from the left, the order of both terms tend to $n$. The rate at which the order approaches these values is dependent on $C_{\alpha}$. Under the assumption that $C_{\alpha} \le \frac{1}{D_{\alpha}}$, we will show explicitly that $\norm{X - x\mathbf{1}_{n}}_{1} = o(n)$. From the proof of Theorem \ref{fpsolution},
		\begin{equation*}
			\norm{X - x\mathbf{1}_{n}}_{1} \le 10000 C_{\alpha} C_{\alpha}^{k} n^{7/8} + D_{\alpha}^{k} n
		\end{equation*}
	for any $k \in \N$. Let $k$ be of the form $k = \left\lceil \log_{D_{\alpha}}{n^{-m}} \right\rceil$ where $m > 0$. Then $D_{\alpha}^{k} \le n^{-m}$ and, since $C_{\alpha} \le \frac{1}{D_{\alpha}}$,
		\begin{equation*}
			C_{\alpha}^{k} \le C_{\alpha} C_{\alpha}^{\log_{D_{\alpha}}{n^{-m}}} \le C_{\alpha} n^{m}.
		\end{equation*}
	Therefore
		\begin{equation*}
			\norm{X - x\mathbf{1}_{n}}_{1} \le 10000 C_{\alpha}^{2} n^{m + 7/8} + n^{1-m},
		\end{equation*}
	and minimizing this over $m$ yields
		\begin{equation*}
			\norm{X - x\mathbf{1}_{n}}_{1} \le \left(10000 C_{\alpha}^{2} + 1\right) n^{15/16}.
		\end{equation*}
\end{remark}

\begin{remark}
	The unique solution of $x = \varphi_{\alpha}(x)$ is close to the solution to the equation
		\begin{equation*}
			x = \frac{1 + \tanh\left( \frac{1}{2} \left( \log\left(\frac{p}{1-p} \right) + \sum\limits_{q=1}^{l} m_q\alpha_q x^{m_q - 1} \right) \right)}{2},
		\end{equation*}
	thereby eliminating the $n$ dependence asymptotically. This condition on the positive weights is commonly referred to as the \textit{high temperature regime}. See Figure \ref{fixed}.
\end{remark}

\begin{figure}[t!]
	\centering
	\includegraphics[clip=true, height=5.5in, angle=-90]{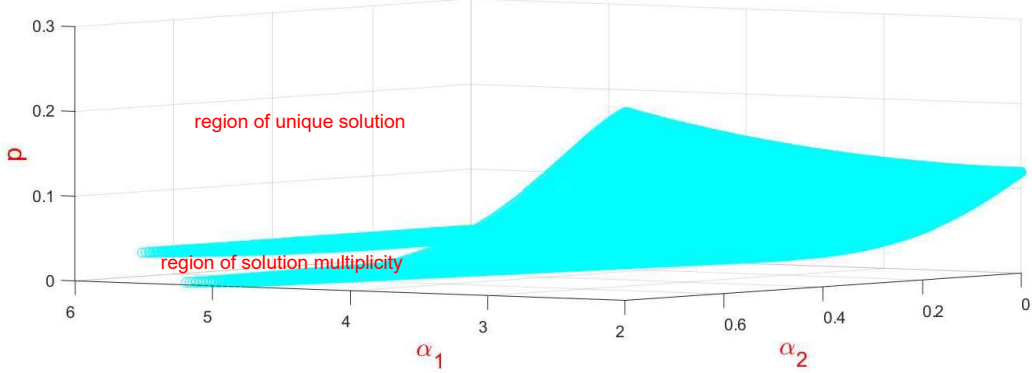}
	\caption{The parameter space as separated by the region with unique vs. multiple solutions to $x = \varphi_{\alpha}(x)$. The asymptotic illustration is for the vertex-weighted edge-triangle model.}
    \label{fixed}
\end{figure}

We now investigate a specific example for a family of subgraph counting functions. It demonstrates that the preceding theorem can give meaningful information with respect to sparse graphs and that the condition $C_{\alpha} \le \frac{1}{D_{\alpha}}$ from Remark \ref{asymptend} is achieved for some substantively interesting models.

\begin{example}
	Consider the subgraph counting function
		\begin{equation*}
			f(X) = \log\left(\frac{p}{1-p}\right)||X||_1+\frac{\alpha}{n} \sum_{i \neq j} X_{i} X_{j},
		\end{equation*}
	where vertex weights take value $0$ with probability $1-p$ and $1$ with probability $p$ and $\alpha \geq 0$.
	
For notational convenience denote $\log \left(p/(1-p)\right)$ by $\gamma$. Consider $p=p(n)=n^{-d}$ for some $d>0$ and $\left|\gamma\right|/200 \leq \alpha \leq \left|\gamma\right|/100$. Ignoring the second term in $f$, this resembles the $G(n, p)$ model of Gilbert \cite{G}. Hence the expected number of vertices will be at least of order $np$. With $p$ of the preceding form, $\gamma \approx -\log{n}$ and $\alpha \approx \log{n}$. For $n$ large enough, $\varphi_{\alpha}(x)$ has a unique fixed point; denote it by $\overline{x}$. Calculating $D_{\alpha}$, we have
	\begin{align*}
		D_{\alpha} &\le \max_{x \in [0,1]} \left| \varphi_{\alpha}^{\prime}(x) \right| \\
			&= \max_{x \in [0,1]} \left| \frac{\alpha \left(1- \frac{1}{n}\right)}{2 \cosh^2 \left(\frac{1}{2}\gamma+\alpha \left(1- \frac{1}{n}\right)x \right)} \right| \\
			&\lesssim \left|\gamma\right| e^{-\left|\gamma\right|/2}.
	\end{align*}
Since $\alpha \to \infty$, for $n$ sufficiently large $D_{\alpha} < 1$ and $\left|\log D_{\alpha}\right| \gtrsim \left|\gamma\right|$. This allows us to utilize Theorem \ref{fpsolution} to say that for all $X \in \mathcal{X}_{f}$,
	\begin{equation}  \label{normbound1}
		\norm{X - \overline{x}\mathbf{1}_{n}}_{1} \le 10000 \varepsilon^{\frac{\log C_{\alpha}}{\log D_{\alpha}}} C_{\alpha}^{2} n ^{7/8} + \varepsilon n.
	\end{equation}
From Equation \ref{calpha}, $C_{\alpha} \approx \left|\gamma\right|$. Note that in the limit $\left|\gamma\right|^2 e^{-\left|\gamma\right|/2} \le 1$, we conclude that the condition $C_{\alpha} \le \frac{1}{D_{\alpha}}$ is satisfied. Examining the exponent on $\varepsilon$ in the first term, we find
	\[ \left| \frac{\log C_{\alpha}}{\log D_{\alpha}} \right| \lesssim \frac{\log{\log{n}}}{d \log{n}}. \]
Since $n$ is finite and fixed, let $\varepsilon = n^{-1}$, then
\[\varepsilon^{\frac{\log C_{\alpha}}{\log D_{\alpha}}} \approx \left( \log{n} \right)^{1/d}.\]
To yield a meaningful bound in Equation \ref{normbound1}, we want the error term to be smaller than the typical number of vertices. Ignoring lower order terms such as logarithms, we require
	\[np \gtrsim n^{7/8},  \]
which forces the condition that $p \gtrsim n^{-1/8}$. For $p$ in this region, the majority of the probability mass in the model lies on sparse graphs. Furthermore, by Corollary \ref{cormain}, there exists a constant $\overline{p}$ and a coupling between $G(n, \overline{p})$ and $X_{n}^{f}$ such that
	\[ \ER\norm{X_{n}^{f}-G(n,\overline{p})}_{1} = o(np). \]
\end{example}

Theorem \ref{fpsolution} shows that for positive weights, any $X \in \mathcal{X}_{f}$ is at most $cn^{1-\eta}$ distance away from the fixed point of $\Phi$ for some constants $c$ and $0 < \eta < 1/8$. This distance can be refined to $cn^{7/8}$ if one assumes that the weights $\alpha_q$ are sufficiently small (either positive or negative). Recall the function $\Phi : \overline{\mathcal{C}}_{n} \to \overline{\mathcal{C}}_{n}$ defined in the proof of this theorem,
	\begin{equation*}
		\Phi(X) = \frac{\mathbf{1}_{n} + \tanh\left( \nabla f(X) \right)}{2}.
	\end{equation*}

\begin{theorem} \label{smallweights}
	Let $f$ be a subgraph counting function of the form \ref{scf} and let
		\begin{equation*}
			J_{\alpha} = \frac{1}{4} \sum_{q=1}^{l} \left|\alpha_q\right| m_q(m_q - 1).
		\end{equation*}
	If $J_{\alpha} < 1$, then the constant solution $X_{c}$ is the only solution to the fixed point equation $X = \Phi(X)$ and, for any $X \in \mathcal{X}_{f}$,
		\begin{equation*}
			\norm{X - X_{c}}_{1} \le \frac{5000C_{\alpha}^{2}}{1-J_{\alpha}} n^{7/8}.
		\end{equation*}
\end{theorem}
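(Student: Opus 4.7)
The plan is to show that under the hypothesis $J_\alpha < 1$, the map $\Phi$ is a strict contraction on $\overline{\mathcal{C}}_n$ in the $1$-norm, and then to exploit this contraction property to obtain both uniqueness of the fixed point and the desired $1$-norm bound. First I would verify the Lipschitz estimate on $\Phi$: for any $X, Y \in \overline{\mathcal{C}}_n$, since $\tanh$ is $1$-Lipschitz and applied entrywise, one has
\begin{equation*}
\norm{\Phi(X) - \Phi(Y)}_1 = \tfrac{1}{2}\norm{\tanh(\nabla f(X)) - \tanh(\nabla f(Y))}_1 \le \tfrac{1}{2}\norm{\nabla f(X) - \nabla f(Y)}_1.
\end{equation*}
Combining this with Lemma \ref{normdistpreserved} immediately yields $\norm{\Phi(X)-\Phi(Y)}_1 \le J_\alpha \norm{X-Y}_1$, so that $\Phi$ is a contraction whenever $J_\alpha < 1$.

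Next I would establish the existence and form of the unique fixed point. Since $f$ is symmetric in its coordinates, $\Phi$ maps any constant vector $c\mathbf{1}_n$ to another constant vector, so the fixed-point equation restricted to the diagonal $\{c\mathbf{1}_n : c \in [0,1]\}$ reduces to a one-dimensional continuous self-map of $[0,1]$, which admits a fixed point $X_c = x_c\mathbf{1}_n$ by Brouwer (or intermediate value theorem). Uniqueness of this fixed point among all of $\overline{\mathcal{C}}_n$ then follows from the Banach fixed point theorem applied to the contraction $\Phi$.

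Finally, I would derive the quantitative bound via the standard contraction estimate. For any $X \in \mathcal{X}_f$, adding and subtracting $\Phi(X)$ and using the triangle inequality together with $\Phi(X_c)=X_c$ gives
\begin{equation*}
\norm{X - X_c}_1 \le \norm{X - \Phi(X)}_1 + \norm{\Phi(X) - \Phi(X_c)}_1 \le \norm{X - \Phi(X)}_1 + J_\alpha \norm{X - X_c}_1.
\end{equation*}
Rearranging yields $(1 - J_\alpha)\norm{X - X_c}_1 \le \norm{X - \Phi(X)}_1$, and the membership $X \in \mathcal{X}_f$ bounds the right-hand side by $5000 C_\alpha^2 n^{7/8}$, producing the claimed inequality after dividing by $1 - J_\alpha$. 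There is no substantive obstacle here; the only mild subtlety is confirming that the Lipschitz constant from Lemma \ref{normdistpreserved}, which was stated for $X,Y \in \mathcal{C}_n$ and $\overline{\mathcal{C}}_n$, applies verbatim on the continuous hypercube where $\nabla f$ is extrapolated via Equation \ref{partial}, but this is immediate since the estimate was proved via linearity in each coordinate.
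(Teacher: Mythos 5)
Your proposal is correct and follows essentially the same route as the paper: establish that $\Phi$ is a $J_\alpha$-contraction in the $1$-norm via Lemma \ref{normdistpreserved} and the $1$-Lipschitz property of $\tanh$, invoke Banach's fixed point theorem for uniqueness, and conclude with the same add-and-subtract triangle inequality rearrangement. The only cosmetic difference is that you re-derive the existence of the constant fixed point from symmetry and the intermediate value theorem, where the paper cites its Lemma \ref{ffp}; the content is the same.
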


	\begin{proof}
		By Lemma \ref{normdistpreserved}, for $X,Y \in \overline{\mathcal{C}}_{n}$,
			\begin{equation*}
				\norm{\nabla f(X) - \nabla f(Y)}_{1} \le \frac{1}{2} \sum_{q=1}^{l} \left|\alpha_q\right| m_q(m_q - 1) \norm{X - Y}_{1}.
			\end{equation*}
		Therefore
			\begin{align*}
				\norm{\Phi(X) - \Phi(Y)}_{1} &= \norm{\frac{\mathbf{1}_{n} + \tanh(\nabla f(X))}{2} - \frac{\mathbf{1}_{n} + \tanh(\nabla f(Y))}{2}} \notag \\
					&\le \frac{1}{2} \norm{\nabla f(X) - \nabla f(Y)}_{1} \notag \\
					&\le \frac{1}{4} \sum_{q=1}^{l} \left|\alpha_q\right| m_q(m_q - 1) \norm{X - Y}_{1} \notag \\
					&= J_{\alpha} \norm{X - Y}_{1}.
			\end{align*}
		Since $J_{\alpha} < 1$, $\Phi$ is a contraction mapping, further implying that $\Phi$ has a unique fixed point: the constant solution $X_c=x\mathbf{1}_n$ provided by Lemma \ref{ffp} and Theorem \ref{fpsolution}. Now let $X \in \mathcal{X}_{f}$. Then
			\begin{align*}
				\norm{X - X_{c}}_{1} &= \norm{X - \Phi(X) + \Phi(X) - \Phi(X_c) + \Phi(X_c) - X_c}_{1} \notag \\
					&\le \norm{X - \Phi(X)}_{1} + \norm{\Phi(X) - \Phi(X_c)}_{1} + \norm{\Phi(X_c) - X_c}_{1} \notag \\
					&\le \norm{X - \Phi(X)}_{1} + J_{\alpha}\norm{X - X_{c}}_{1}.
			\end{align*}
		From the above inequality, one finds that
			\begin{equation*}
				\norm{X - X_{c}}_{1} \le \frac{\norm{X - \Phi(X)}_{1}}{1-J_{\alpha}} \le \frac{5000C_{\alpha}^{2}}{1-J_{\alpha}} n^{7/8}.
			\end{equation*}
	\end{proof}

\section{Triangle Case}

Let us take a closer look at the special case of the subgraph counting function consisting solely of triangles
	\begin{equation} \label{tri}
		f(X) = \frac{\alpha}{n^2} \sum_{i \neq k \neq j} X_i X_k X_j.
	\end{equation}
Eldan and Gross showed in Theorem 21 of \cite{EG2} that for a subgraph counting function that counts triangles in simple graphs, there exists a constant $\alpha_0 < 0$ such that for all $\alpha<\alpha_0$ there is a non-constant solution in the form of a stochastic block matrix consisting of two communities. Furthermore, they determined the asymptotic tendency of the edge presence probability as $\alpha \to -\infty$.

Without loss of generality, take $n$ even. (The odd case follows a similar line of reasoning.) Since we are, implicitly, placing equal probability on vertex weights being $0$ and $1$ (cf. Equation \ref{single}), some structure of symmetry is expected. We assume that there is a solution to the fixed point equation
	\begin{equation} \label{fixedpt}
		X = \frac{\mathbf{1}_{n} + \tanh(\nabla f(X))}{2}
	\end{equation}
in the form of a vector with the first $n/2$ entries taking value $a \in [0,1]$ and second $n/2$ entries taking value $b \in [0,1]$,
\[ X = (a,a,\dots,a,b,b,\dots,b). \]
Recall the definition of the discrete derivative. Note that for the subgraph counting function defined above in Equation \ref{tri}, the discrete derivative at the $j$th coordinate may be written as
	\[ \partial_j f(X) = \frac{3\alpha}{2n^2} \sum_{i \neq k \neq j} X_i X_k. \]
From here, we examine the possibilities for the product $X_i X_k$ in the sum. There are three types of terms in this sum since there are two different entries in the vector $X$. Working out the various combinations gives
	\[ \partial_j f(X) = \frac{3\alpha}{n^2} \left( {\frac{n}{2} - 1 \choose 2} a^2 + {\frac{n}{2} \choose 2} b^2 + \left(\frac{n}{2} - 1\right) \frac{n}{2} ab \right).\]
Writing this more compactly, it follows that if $X_j = a$, then
	\[ \partial_j f(X) = \frac{3\alpha}{2n^2} \left( \frac{n}{2} - 1 \right) \left(\frac{n}{2}(a+b)^2 - 2a^2 \right), \]
and similarly, if $X_j = b$, then
	\[ \partial_j f(X) = \frac{3\alpha}{2n^2} \left( \frac{n}{2} - 1 \right) \left(\frac{n}{2}(a+b)^2 - 2b^2 \right). \]
From here we obtain a system of two equations in $a$ and $b$ that must be solved simultaneously:
	\begin{equation*}
		a = \frac{1 + \tanh\left( \frac{3\alpha}{2n^2} \left( \frac{n}{2} - 1 \right) \left(\frac{n}{2}(a+b)^2 - 2a^2 \right) \right)}{2}
	\end{equation*}
and
	\begin{equation*}
		b = \frac{1 + \tanh\left( \frac{3\alpha}{2n^2} \left( \frac{n}{2} - 1 \right) \left(\frac{n}{2}(a+b)^2 - 2b^2 \right) \right)}{2}.
	\end{equation*}
Instead of considering the previous two equations for $\alpha < 0$, for simplicity, we consider the equations
	\begin{equation} \label{a-eq}
		1 - 2a = \tanh\left( \frac{\alpha}{2n^2} \left( \frac{n}{2} - 1 \right) \left(\frac{n}{2}(a+b)^2 - 2a^2 \right) \right)
	\end{equation}
and
	\begin{equation} \label{b-eq}
		1 - 2b = \tanh\left( \frac{\alpha}{2n^2} \left( \frac{n}{2} - 1 \right) \left(\frac{n}{2}(a+b)^2 - 2b^2 \right) \right)
	\end{equation}
for $\alpha > 0$, where we assimilated the factor of $3$ inside $\alpha$.

Let $n$ be an even integer such that $n > 4$. Fix $b = 1$ in Equation \ref{a-eq}. The left hand side of Equation \ref{a-eq} is decreasing from $1$ to $-1$ while the right hand side is increasing with values between $0$ and $1$. Thus the left side and right side intersect in a unique $a_1 \in (0,1)$ that solves Equation \ref{a-eq} for $b = 1$. Now let $b = 0$ in the same equation. By similar reasoning, there exists a unique $a_2 \in (0,1)$ that solves Equation \ref{a-eq} for $b = 0$ with $a_1 < a_2$. For each $b \in [0,1]$, there is a unique $a$ such that the pair $(a,b)$ is a solution to Equation \ref{a-eq}. Let $g:[0,1] \to [a_1,a_2]$ be the function that produces a unique $a$ for each $b$, that is, $g(b) = a$ and the pair $(a,b)$ is a solution to Equation \ref{a-eq}. Similarly, we can consider a fixed $a \in [0,1]$ in Equation \ref{b-eq} to obtain a unique $b \in (0,1)$ such that the pair $(a,b)$ is a solution to Equation \ref{b-eq}. This also yields a corresponding $b_1$ and $b_2$ when fixing $a = 1$ and $a = 0$, respectively. Denote by $h:[0,1] \to [b_1,b_2]$ the function that produces a unique $b$ for each $a \in [0,1]$ such that $h(a) = b$ and $(a,b)$ is a solution to Equation \ref{b-eq}. Both $g$ and $h$ are decreasing on their respective domains. Note that if the functions $g$ and $h$ intersect at some point $(a,b)$, then the point $(a,b)$ is a solution to Equations \ref{a-eq} and \ref{b-eq} simultaneously; therefore, the pair $(a,b)$ forms a solution to the fixed point equation. 

Consider now the behavior of $a_1$ and $a_2$ as $\alpha$ diverges to infinity. We first show that $a_2$ tends to $0$ as $\alpha \to \infty$ with a rate of decay controlled by a polynomial. This implies that $a_1$ also approaches $0$ as $\alpha$ diverges. We also prove a corresponding result that controls the rate at which $a_1$ approaches $0$, showing that the decay is bounded below by an exponential. Using the symmetry afforded by Equations \ref{a-eq} and \ref{b-eq}, similar results hold for $b_1$ and $b_2$.

For the proof of the following Lemma \ref{a2}, we require the Lambert $W$ function. The Lambert $W$ function is a set of functions satisfying the functional equation
	\[ W(z)e^{W(z)} = z \]
for any $z \in \mathbb{C}$, and may be equivalently written as $W(ze^z) = z$. If we restrict our attention to the reals, there are two branches of this function, denoted by $W_{-1}$ and $W_{0}$, with the latter known as the principal branch. We utilize the branch $W_0$ in the following proof where $W_{0} : [-1/e,\infty) \to [-1,\infty)$. The principal branch satisfies the inequality
	\begin{equation} \label{lambertineq}
		\log{x} - \log{\log{x}} + \frac{\log{\log{x}}}{2\log{x}} \le W_{0}(x) \le \log{x} - \log{\log{x}} + \frac{e}{e-1} \frac{\log{\log{x}}}{\log{x}}
	\end{equation}
for all $x \ge e$ as shown in \cite{HH}. For more information on the Lambert $W$ function, see \cite{CGHJK}.

\begin{lemma} \label{a2}
Take $\alpha$ sufficiently large. Let $a_2$ be defined as above. Then
		\begin{equation} \label{a2lambert}
			a_2 < \sqrt{\frac{n^2 \log\left( \frac{4\alpha}{n^2} {\frac{n}{2} -1 \choose 2} \right)}{4\alpha{\frac{n}{2} -1 \choose 2} }}.
		\end{equation}
\end{lemma}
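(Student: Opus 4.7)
\medskip

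\noindent\textbf{Proof proposal.} The plan is to first simplify the defining equation for $a_2$ into a clean scalar form, then replace $\tanh$ by a sharp exponential lower bound, and finally invoke the Lambert $W$ function to extract an explicit upper bound on $a_2$.

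First I would substitute $b = 0$ into Equation \ref{a-eq}. Using $(n/2-1)(n/2-2) = 2\binom{n/2-1}{2}$, the equation collapses to
\begin{equation*}
  1 - 2a_2 \;=\; \tanh\!\left(c\, a_2^2\right), \qquad c \;:=\; \frac{\alpha}{n^2}\binom{n/2-1}{2}.
\end{equation*}
For $\alpha$ large we expect $a_2$ close to $0$, so the argument $c a_2^2$ should be driven to $+\infty$ and we replace $\tanh$ by its exponential tail bound. The identity $\tanh(x) = 1 - \frac{2}{e^{2x}+1}$ and the elementary inequality $\frac{2}{e^{2x}+1} < 2 e^{-2x}$ (valid for every real $x$) give $\tanh(x) > 1 - 2e^{-2x}$, strictly. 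Plugging in $x = c a_2^2 > 0$ and rearranging yields
\begin{equation*}
  a_2 \;<\; e^{-2 c\, a_2^2}.
\end{equation*}

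Next I would square both sides (legal since $a_2 > 0$) to obtain $a_2^2 e^{4 c a_2^2} < 1$. Introducing the change of variable $s := 4 c a_2^2$, so that $a_2^2 = s/(4c)$, this becomes the Lambert-type inequality
\begin{equation*}
  s\, e^{s} \;<\; 4c.
\end{equation*}
Since the principal branch $W_0$ is the inverse of $s \mapsto s e^{s}$ on $[0,\infty)$ and is strictly increasing, this forces $s < W_0(4c)$, i.e.
\begin{equation*}
  a_2^2 \;<\; \frac{W_0(4c)}{4c}.
\end{equation*}

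Finally I would use the upper half of Inequality \ref{lambertineq}: for $4c \ge e$ (certainly satisfied for $\alpha$ large enough) one has $W_0(4c) < \log(4c)$, because the correction terms $-\log\log(4c) + \tfrac{e}{e-1}\tfrac{\log\log(4c)}{\log(4c)}$ are negative once $\log(4c)$ exceeds the fixed threshold where the function $y \mapsto -\log y + \tfrac{e}{e-1}\tfrac{\log y}{y}$ becomes negative. Substituting $c = \frac{\alpha}{n^2}\binom{n/2-1}{2}$ back and taking square roots gives exactly
\begin{equation*}
  a_2 \;<\; \sqrt{\frac{\log\bigl(4c\bigr)}{4c}} \;=\; \sqrt{\frac{n^2 \log\!\left(\frac{4\alpha}{n^2}\binom{n/2-1}{2}\right)}{4\alpha\binom{n/2-1}{2}}},
\end{equation*}
as claimed. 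The proof is essentially a sequence of careful but routine manipulations; the only place where one must be slightly careful is the passage from $a_2 < e^{-2 c a_2^2}$ to a Lambert $W$ bound, where the right normalization (squaring first so that the exponent and the base of the resulting inequality are compatible with the defining identity $W(z) e^{W(z)} = z$) is what makes the constants line up with the statement of the lemma.
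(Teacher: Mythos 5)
Your proposal is correct and follows essentially the same route as the paper's proof: set $b=0$ to reduce to $1-2a_2=\tanh(c\,a_2^2)$, convert $\tanh$ to exponentials to get $a_2<e^{-2c a_2^2}$, square and rescale to a Lambert-type inequality $s e^{s}<4c$, and then apply the upper bound in Inequality \ref{lambertineq} to replace $W_0(4c)$ by $\log(4c)$ for $\alpha$ large. The only cosmetic difference is notational ($c$ versus the paper's $N\alpha$), and your threshold remark should really be $4c>e^{e/(e-1)}$ so that the correction terms in \ref{lambertineq} are nonpositive, which is exactly the condition the paper imposes for $\alpha$ sufficiently large.
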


	\begin{proof}
		Recall that $a_2$ is the unique solution to
			\[ 1-2a_2 = \tanh\left( \frac{\alpha}{2n^2} \left( \frac{n}{2} - 1 \right) \left( \frac{n}{2} - 2 \right) a_{2}^{2} \right). \]
		Let $N$ be defined as
			\[ N = \frac{1}{2n^2} \left( \frac{n}{2} - 1 \right) \left( \frac{n}{2} - 2 \right).  \]
		Then the preceding equation becomes
			\[ 2a_2 = 1 - \tanh(N\alpha a_{2}^2). \]
		Converting $\tanh$ to exponentials yields
			\[ a_2 = \frac{1}{e^{2N\alpha a_2^2} + 1} < e^{-2N\alpha a_2^2}. \]
		Letting $a = a_2^2$, we wish to solve the inequality
			\[ \sqrt{a} e^{2N\alpha a} < 1. \]
		Squaring both sides and multiplying by $4N\alpha$ gives
			\[ 4N\alpha a e^{4N\alpha a} < 4N\alpha \]
		and, since $4N\alpha > e^{e/(e-1)}$ for $\alpha$ large eventually, we use the function $W_0$ to obtain
			\[ a < \frac{W_{0}(4N\alpha)}{4N\alpha} < \frac{\log\left( 4N\alpha \right)}{4N\alpha}, \]
		where the final inequality comes from using the bound in Equation \ref{lambertineq}. Converting back to $a_2$ and $n$, the desired Inequality \ref{a2lambert} is derived.
	\end{proof}

We now show that the decay of $a_1$ to $0$ is slower than a certain exponential.

\begin{lemma} \label{a1}
Take $\alpha$ sufficiently large. Let $a_1$ be defined as above. Then $a_1 > e^{-c\alpha}$ for all $\displaystyle c > \frac{n-2}{n}$.
\end{lemma}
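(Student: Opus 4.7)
The plan is to mimic the opening of the proof of Lemma \ref{a2}: rewrite the defining equation for $a_1$ as a closed-form expression involving an exponential, then obtain a lower bound by crudely bounding the exponent from above. The key difference from Lemma \ref{a2} is that here the exponent has an additive $(a_1+1)^2$ term that is bounded away from $0$, so no delicate Lambert $W$ analysis is needed; a uniform upper bound on the prefactor suffices.

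First I would recall that $a_1$ is defined as the unique solution of Equation \ref{a-eq} with $b = 1$, namely
\[
1 - 2a_1 = \tanh\!\left( \frac{\alpha}{2n^2} \left( \frac{n}{2} - 1 \right) \left(\frac{n}{2}(a_1+1)^2 - 2a_1^2 \right) \right).
\]
Abbreviating
\[
K = K(a_1) = \frac{1}{2n^2} \left( \frac{n}{2} - 1 \right) \left(\frac{n}{2}(a_1+1)^2 - 2a_1^2 \right),
\]
the identity $\tanh(x) = 1 - 2/(e^{2x}+1)$ converts the defining equation into the exact formula $a_1 = 1/(e^{2K\alpha} + 1)$, as in the $a_2$ case.

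Next I would bound $K$ from above using only $a_1 \in [0,1]$. Dropping the nonpositive $-2a_1^2$ and using $(a_1+1)^2 \le 4$ gives $\frac{n}{2}(a_1+1)^2 - 2a_1^2 \le 2n$, so
\[
K \le \frac{1}{2n^2}\cdot \frac{n-2}{2} \cdot 2n = \frac{n-2}{2n}.
\]
Therefore $2K\alpha \le \frac{n-2}{n}\alpha$ for every $\alpha > 0$, and consequently
\[
a_1 = \frac{1}{e^{2K\alpha}+1} \ge \frac{1}{e^{\frac{n-2}{n}\alpha}+1} \ge \frac{1}{2} e^{-\frac{n-2}{n}\alpha}
\]
for $\alpha$ large enough that the denominator on the right satisfies $e^{\frac{n-2}{n}\alpha}+1 \le 2e^{\frac{n-2}{n}\alpha}$ (any $\alpha \ge 0$ works).

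Finally, fix $c > \frac{n-2}{n}$ and set $\delta = c - \frac{n-2}{n} > 0$. Then
\[
\frac{1}{2} e^{-\frac{n-2}{n}\alpha} > e^{-c\alpha} \iff e^{\delta \alpha} > 2,
\]
which holds for all $\alpha > \log 2 / \delta$. Combining this with the previous display yields $a_1 > e^{-c\alpha}$ for all $\alpha$ sufficiently large (depending on $c$), as required. There is no real obstacle in this argument; the entire content is the crude upper bound $K \le (n-2)/(2n)$, which is available precisely because $(a_1+1)^2$ is automatically of order $1$, in contrast to the $a_2^2$ appearing in Lemma \ref{a2} which vanishes as $\alpha \to \infty$.
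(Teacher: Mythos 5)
Your proof is correct, and it takes a different route from the paper's. The paper proves Lemma \ref{a1} by a comparison argument: it substitutes the candidate value $a = e^{-c\alpha}$ into Equation \ref{a-eq} with $b=1$, and shows via L'H\^{o}pital's rule that $2e^{-c\alpha}/\bigl(1-\tanh(y(\alpha))\bigr) \to 0$, so that the left side exceeds the right side at that point; combined with the monotonicity of the two sides established earlier in the section, this forces $a_1 > e^{-c\alpha}$. You instead invert the defining equation exactly, $a_1 = 1/(e^{2K(a_1)\alpha}+1)$ (the same algebraic step the paper uses in Lemma \ref{a2}), and then exploit the fact that the self-referential exponent admits the uniform bound $K(a_1) \le (n-2)/(2n)$ for all $a_1 \in [0,1]$ --- which is exactly the paper's bound $y(\alpha) \le \alpha(n-2)/(2n)$, just deployed directly rather than inside a limit computation. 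Your version buys a sharper and more explicit conclusion, namely $a_1 \ge \tfrac{1}{2}e^{-\frac{n-2}{n}\alpha}$ for all $\alpha \ge 0$ with an explicit threshold $\alpha > \log 2/(c - \tfrac{n-2}{n})$ for the stated inequality, and it sidesteps both the L'H\^{o}pital computation and the reliance on monotonicity of the two sides of Equation \ref{a-eq}. The only hypotheses you use implicitly are $\alpha > 0$ and $n > 4$ (so that $K \ge 0$), both of which are in force in this section.
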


 	\begin{proof}
 		We prove this result in a similar way to the previous Lemma \ref{a2}. Let $c$ be such that
 			\[ c > \frac{n-2}{n}. \]
 		Recall that $a_1$ is the unique solution to
 			\[ 1 - 2a = \tanh\left( \frac{\alpha}{2n^2} \left( \frac{n}{2} - 1 \right) \left(\frac{n}{2}(a+1)^2 -2a^2 \right) \right). \]
 		We will show that if $a = e^{-c\alpha}$ in the previous display, then the left hand side must be greater than the right hand side. This implies that $a_1 > e^{-c\alpha}$. Consider the limit
 			\[ \lim_{\alpha \to \infty} \frac{2e^{-c\alpha}}{1 - \tanh\left( \frac{\alpha}{2n^2} \left( \frac{n}{2} - 1 \right) \left(\frac{n}{2}(e^{-c\alpha}+1)^2 -2e^{-2c\alpha} \right) \right)} \]
 		and let $y(\alpha)$ denote the expression inside of $\tanh$. Note that $y(\alpha) \le \alpha (n-2)/(2n)$. Using L'H\^{o}pital's rule we find
 			\begin{align*}
 				\lim_{\alpha \to \infty} \frac{2e^{-c\alpha}}{1 - \tanh\left(y(\alpha) \right)}
 				&= \lim_{\alpha \to \infty} 2c\left( \frac{dy}{d\alpha} \right)^{-1} e^{-c\alpha} \cosh^2 \left( y(\alpha) \right) \\
 				&= \lim_{\alpha \to \infty} \frac{c}{2}\left( \frac{dy}{d\alpha} \right)^{-1} e^{-c\alpha} \left( e^{-2 y(\alpha)} + e^{2 y(\alpha)} + 2 \right) \\
 				&= 0.
 			\end{align*}
 		The last line can be justified as follows. The expression $dy/d\alpha$ limits to a nonzero constant. Also, since $c > (n-2)/n$ and $y(\alpha) \le \alpha (n-2)/(2n)$, it follows that $c \alpha > 2y(\alpha)$. 
 		This implies that $a_1 > e^{-c\alpha}$.
 	\end{proof}

Lemma \ref{a2} proves that as $\alpha \to \infty$, both $a_1$ and $a_2$ approach zero since $a_1 < a_2$. Lemma \ref{a1} gives that the speed with which $a_1$ tends to zero is bounded below. 
Taking the limit of $n$ in Equations \ref{a-eq} and \ref{b-eq} yields
	\[ 1-2a = \tanh\left(\frac{\alpha (a+b)^2}{8}\right) \]
and
	\[ 1-2b = \tanh\left(\frac{\alpha (a+b)^2}{8}\right) \]
by the continuity of $\tanh$. Thus, for $n$ large enough, the only solution to this system is given by the constant solution $a = b$. By Lemma 33 of \cite{EG2}, the equation
	\[ 1-2a = \tanh\left( \frac{\alpha a^2}{2} \right) \]
has a unique solution $a \in (0,1)$ for all $\alpha \in \R$. Therefore for this special asymptotic scenario of vertex-weighted graphs, unlike the simple graphs counterpart, the only solution to the fixed point Equation \ref{fixedpt} is the constant vector, which approaches the all zeros vector as $\alpha \to \infty$ by Lemma \ref{a2}.

\begin{figure}[tp]
\centering
\includegraphics[width=.33333\textwidth]{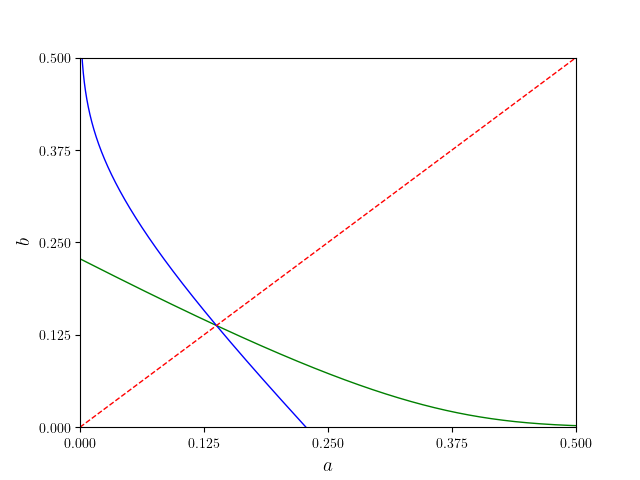}\hfill
\includegraphics[width=.33333\textwidth]{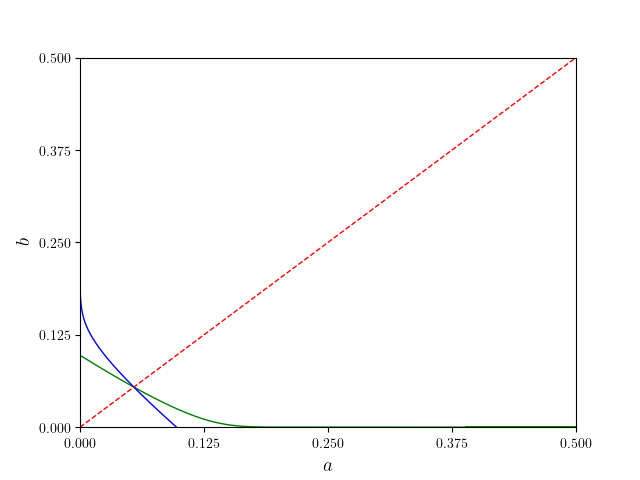}\hfill
\includegraphics[width=.33333\textwidth]{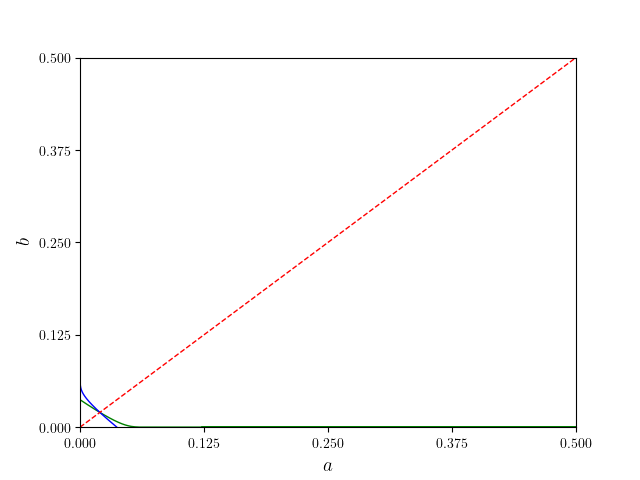}
\caption{These figures simulate the movement of the solution to the system of Equations \ref{a-eq} and \ref{b-eq} toward the origin.}
\label{SysSol}
\end{figure}

Figure \ref{SysSol} simulates the movement of the solution toward the origin as $\alpha$ diverges to $\infty$. The value of $n$ was taken to be sufficiently large and even. From left to right in Figure \ref{SysSol}, the values of $\alpha$ are $100$, $1000$, and $10000$.

\section{Other Subgraph Counting Functions}

We now investigate another type of subgraph counting function on the space of $\{0,1\}$ vertex-weighted exponential random graphs. Consider the Hamiltonian $f: \mathcal{C}_n \to \R$ such that
	\begin{equation} \label{sums}
		f(X) = \frac{\alpha}{n} \sum_{i \neq j} \left(X_i + X_j\right) + \log\left(\frac{p}{1-p}\right) \norm{X}_1,
	\end{equation}
where $\alpha \in \R$. We interpret this function in the context of social networks as follows. Fix $i \neq j$. Suppose that vertices $i$ and $j$ are members of a social network where $X_i$ is the vertex weight corresponding to Person $i$, which indicates whether they want to make a new connection ($X_i = 1$) or not ($X_i = 0$). The vertex weights induce a weight on the edges such that the weight of the edge $i \sim j$ is $(X_i + X_j)$. The quantity $(X_i + X_j)$ can take values in $\{0,1,2\}$. If the value of the sum is $0$, then both $i$ and $j$ are uninterested in making a new connection. If the value is $1$, then one of the parties is interested in making a new connection whereas the other is not. Lastly, if the value is $2$, then both $i$ and $j$ are interested in making a new connection.

In order to apply Theorem \ref{ergmsbm}, we must determine bounds on $D, L_1$, and $L_2$ as defined in the theorem. The gradient of $f$ can be simplified as
	\[ \nabla f(X) = \left(\alpha \left(1 - \frac{1}{n}\right) + \frac{1}{2}\log\left(\frac{p}{1-p}\right) \right) \mathbf{1}_{n}. \]
Since $\nabla f$ is constant, it follows that
	\[ L_1 = \max\left\{ 1, \alpha \left(1 - \frac{1}{n}\right) + \frac{1}{2}\log\left(\frac{p}{1-p}\right) \right\} \]
and $L_2 = 1$. Recall Definition \ref{gc} for the gradient complexity $\mathcal{D}(f)$. For the subgraph counting function in Equation \ref{sums}, we have
	\begin{align*}
		\mathcal{D}(f) &= \ER\left( \sup\left\{ \left(\alpha \left(1 - \frac{1}{n}\right) + \frac{1}{2}\log\left(\frac{p}{1-p}\right)\right) \left\langle \mathbf{1}_n, \Gamma \right\rangle, 0 \right\} \right) \\
			&\le \left|\alpha \left(1 - \frac{1}{n}\right) + \frac{1}{2}\log\left(\frac{p}{1-p}\right)\right| \ER \left| \left\langle \mathbf{1}_n, \Gamma \right\rangle \right|.
	\end{align*}
Note that $Y = \left\langle \mathbf{1}_n, \Gamma \right\rangle$ is a normal random variable with $\mu_{Y} = 0$ and $\sigma_{Y} = \sqrt{n}$. It follows then that $\left| Y \right|$ is a folded normal random variable with $\mu_{\left|Y\right|} = \sqrt{\frac{2n}{\pi}}$. This gives a bound on the gradient complexity as
	\begin{equation}
		\mathcal{D}(f) \le \left|\alpha \left(1 - \frac{1}{n}\right) + \frac{1}{2}\log\left(\frac{p}{1-p}\right)\right| \sqrt{\frac{2n}{\pi}}.
	\end{equation}
Therefore $D = o(n)$ and Theorem \ref{ergmsbm} yields that if $X_{n}^{f}$ is a random vector distributed according to
	\[ \PR\left(X_{n}^{f} = X\right) = \exp\left(f(X)\right)/Z, \]
then $X_{n}^{f}$ is a $(\rho,o(1))$-mixture with $\rho(\mathcal{X}_{f}) = 1 - o(1)$ where
	\[ \mathcal{X}_{f} = \left\{ X \in \overline{\mathcal{C}_{n}}: \norm{X - \frac{\mathbf{1}_{n} + \tanh\left(\nabla f(X) \right)}{2}}_1 \le 5000L_1 L_2^{3/4} D^{1/4} n^{3/4} \right\}. \]

\section*{Acknowledgements}
Mei Yin thanks Patrick Shipman for helpful conversations. The authors would also like to thank Ronen Eldan and Renan Gross for enlightening discussions.


\begin{thebibliography}{30}

\bibitem{BBS} Bhamidi, S., Bresler, G., Sly, A.: Mixing time of exponential random graphs. \textit{Ann. Appl. Probab.} 21: 2146-2170 (2011).

\bibitem{ChDi} Chatterjee, S., Diaconis, P.: Estimating and understanding exponential random graph models. \textit{Ann. Statist.} 41: 2428-2461 (2013).

\bibitem{CGHJK} Corless, R.M., Gonnet, G.H., Hare, D.E.G., Jeffrey, D.J., Knuth, D.E.: On the Lambert $W$ function. \textit{Adv. Comput. Math.} 5: 329-359 (1996).

\bibitem{CD} Cranmer, S.J., Desmarais, B.A.: Inferential network analysis with exponential random graph models. \textit{Pol. Anal.} 19: 66-86 (2011).

\bibitem{DG} Dasgupta, S., Gupta, A.: An elementary proof of a theorem of Johnson and Lindenstrauss. \textit{Random Structures Algorithms} 22: 60-65 (2003).

\bibitem{DEY} DeMuse, R., Easlick, T., Yin, M.: Mixing time of vertex-weighted exponential random graphs. \textit{J. Comput. Appl. Math.} 362: 443-459 (2019).

\bibitem{EG} Eldan, R., Gross, R.: Decomposition of mean-field Gibbs distribtions into product measures. \textit{Electron. J. Probab.} 23: 1-24 (2018).

\bibitem{EG2} Eldan, R., Gross, R.: Exponential random graphs behave like mixtures of stochastic block models. \textit{Ann. Appl. Probab.} 28: 3698-3735 (2018).

\bibitem{F1} Fienberg, S.E.: Introduction to papers on the modeling and analysis of network data. \textit{Ann. Appl. Statist.} 4: 1-4 (2010).

\bibitem{F2} Fienberg, S.E.: Introduction to papers on the modeling and analysis of network data II. \textit{Ann. Appl. Statist.} 4: 533-534 (2010).

\bibitem{G} Gilbert, E.N.: Random graphs. \textit{Ann. Math. Statist.} 30: 1141-1144 (1959).

\bibitem{HH} Hoorfar, A., Hassani, M.: Inequalities on the Lambert $W$ function and hyperpower function. \textit{J. Inequal. Pure Appl. Math.} 9: 1-5 (2008).

\bibitem{J} Jackson, M.: The past and future of network analysis in economics. In: Bramoull\'{e}, Y., Galeotti, A., Rogers, B. (eds.) The Oxford Handbook on the Economics of Networks. \url{10.1093/oxfordhb/9780199948277.013.2} \textit{Oxford University Press}, Oxford, UK (2016).

\bibitem{K} Krioukov, D.: Clustering implies geometry in networks. \textit{Phys. Rev. Lett.} 116: 208302 (2016).

\bibitem{LL} Lov\'asz, L.: Large Networks and Graph Limits. \textit{American Mathematical Society}, Providence, USA (2012).

\bibitem{MS} Milman, V.D., Schechtman, G.: Asymptotic Theory of Finite Dimensional Normed Spaces. \textit{Springer-Verlag New York Inc.}, New York, USA (1986).

\bibitem{Yin} Yin, M.: Phase transitions in edge-weighted exponential random graphs. \textit{arXiv: 1607.04084} (2016).

\end{thebibliography}
\end{document}